\newcommand{\R}{\mathbb{R}}
\newcommand{\Rn}{\mathbb{R}^n}
\newcommand{\N}{\mathbb{N}}
\newcommand{\vep}{\varepsilon}
\newcommand{\dist}{\operatorname{dist}}
\newcommand{\mres}{\mathbin{\vrule height 1.6ex depth 0pt width
0.13ex\vrule height 0.13ex depth 0pt width 1.3ex}}
\newcommand{\dhr}{dH^{n-1}\mres \Gamma}
\newcommand{\dini}{\operatorname{Dini}}
\newcommand{\op}{\Omega^+}
\newcommand{\om}{\Omega^-}
\newcommand{\opm}{\Omega^\pm}
\newcommand{\mes}[1]{\, dH^{n-1}\mres #1 }
\newcommand{\gdiniz}{\omega_{g,0}}
\newcommand{\gradiniz}{\omega_{\nabla \psi, 0}}
\newtheorem{thm}{Theorem}[section]
\newtheorem{prop}[thm]{Proposition}
\newtheorem{cor}[thm]{Corollary}
\newtheorem{lem}[thm]{Lemma}
\theoremstyle{definition}
\newtheorem{defn}[thm]{Definition}
\newtheorem{rem}[thm]{Remark}
\numberwithin{equation}{section}
\author[I. U. Erneta]{I\~{n}igo U. Erneta}
    \address{I\~{n}igo U. Erneta. Department of Mathematics\\
    Rutgers University\\
110 Frelinghuysen Rd., Piscataway, NJ 08854, USA}
    \email{inigo.erneta@rutgers.edu}
\author[M.~Soria-Carro]{Mar\'ia Soria-Carro}
    \address{Mar\'ia Soria-Carro. Department of Mathematics\\
    Rutgers University\\
110 Frelinghuysen Rd., Piscataway, NJ 08854, USA}
    \email{maria.soriacarro@rutgers.edu}
    \keywords{Poisson's equation, surface measures, transmission problems, regularity, Dini condition.}
\subjclass[2020]{Primary: 35B65, 35J25. Secondary: 35J05.}
\thanks{The second author is partially supported by NSF grant DMS-2247096.}
\begin{document}

\title[Lipschitz regularity for Poisson equations involving measures]{Lipschitz regularity for Poisson equations involving measures supported on $C^{1,\dini}$ interfaces}

\begin{abstract}
We prove optimal Lipschitz regularity of solutions to Poisson's equation with measure data supported on a $C^{1,\dini}$ interface and with $C^{0,\dini}$ density.
We achieve this by deriving pointwise gradient estimates on the interface, further showing the piecewise differentiability of solutions up to this surface.
Our approach relies on perturbation arguments and estimates for the Green's function of the Laplacian.
Additionally, we provide sharp counterexamples highlighting the minimality of our assumptions.
\end{abstract}

\maketitle

\section{Introduction} 

Let $\Omega \subset \R^n$, with $n\geq 3$, be a smooth bounded domain, and let $\Gamma = \partial \op$ for some domain $\op$ compactly contained in $\Omega$.
In this paper, we consider the following 
Dirichlet problem for Poisson's equation involving surface measures
\begin{equation}\label{eq:main}
\begin{cases} 
\Delta u = g \mes{\Gamma} & \hbox{in}~\Omega,\\
u=0 & \hbox{on}~\partial \Omega,
\end{cases}
\end{equation}
where $g$ is a measurable function on the surface~$\Gamma$ and $\mes{\Gamma}$ denotes the Hausdorff measure restricted to~$\Gamma$.
This model describes diffusion processes with jump discontinuities of the gradient across a fixed boundary $\Gamma$, and has applications in physics, biology, and engineering.
The equation in~\eqref{eq:main} has also appeared in the context of free boundary problems, such as 
the Bernoulli one-phase and two-phase problems~\cite{AC, ACF} and the inverse conductivity problem~\cite{ACS}, where $\Gamma = \partial \{u > 0\}$ is 
another unknown of the problem.

Poisson's equation with measure data has been extensively studied since the 20th century,
and its general theory is well-established using techniques from potential theory and PDEs. 
We refer the reader to~\cite{Ponce} for a complete exposition.
For bounded Radon measures $\mu$, the notion of $L^1$-weak solution to $\Delta u =\mu$ was introduced by Littman, Stampacchia, and  Weinberger in the seminal work~\cite{LSW}.
There, the authors showed via a duality method 
that the unique weak solution to the homogeneous Dirichlet problem belongs to the Sobolev space $W_0^{1,p}(\Omega)$, for any $p\in [1, \tfrac{n}{n-1}\big)$. 
When $\mu$ is a surface measure of the form
\begin{equation} \label{eq:measure}
\mu = g \mes{\Gamma},\text{ with } \Gamma~\hbox{Lipschitz and } g \in L^{\infty}(\Gamma),
\end{equation}
then a similar argument improves the regularity of the solution to $W_0^{1,p}(\Omega)$, for all $p\in [1,\infty)$ (see~\cite[Proposition 3.4]{Muller1}).  
By Sobolev embedding, it follows that $u\in C_{\rm loc}^{0,\alpha}(\Omega)$, for all $\alpha\in(0,1)$.
The case $\alpha=1$ is critical for the duality method and thus requires special attention.

The issue of regularity has long been a cornerstone of research in elliptic PDEs, with significant advances in recent decades.
In the context of linear and quasilinear equations involving measure data, several works~\cite{DM,K1,KM,Lieberman,Mingione,RZ} have established continuity and differentiability results under growth assumptions on the measure.
To prove gradient estimates for Poisson's equation $\Delta u = \mu$, classical potential theory leads naturally to the following localized Riesz potential 
\begin{equation*}
I_1^{|\mu|} (x,R) := \int_0^R \frac{|\mu|(B_r(x))}{r^{n-1}} \frac{dr}{r}, \qquad R>0, \quad x\in \Rn,
\end{equation*}
where $|\mu|$ denotes the total variation of $\mu$ (see~\cite{Mingione}). 
In particular, the local \textit{Lipschitz criterion} asserts that if $I_1^{|\mu|} (\cdot,R)\in L^\infty(\Omega)$  for all $R>0$, then $u \in C_{\rm loc}^{0,1}(\Omega)$, this being the optimal regularity. 
Note that if $\mu$ satisfies~\eqref{eq:measure}, 
then $|\mu|(B_r(x)) \sim r^{n-1}$ for $x \in \Gamma$, and $I_1^{|\mu|}(\cdot, R)$ 
blows up on $\Gamma$.
Therefore, surface measures are a borderline case for the above criterion.
Clearly, if $u$ is a weak solution to~\eqref{eq:main}, then $u$ is smooth in $\Omega\setminus \Gamma$ as $u$ is harmonic outside of $\Gamma$.
The main difficulty is to obtain the optimal regularity of $u$ on this surface. 

Problem~\eqref{eq:main} can be formulated as a boundary-value problem describing the behavior of solutions on the surface $\Gamma$.
Indeed, let $\om=\Omega \setminus \overline \op$ and note that $\Gamma=\partial \Omega^+$ is the interface separating the two domains $\Omega^+$ and $\Omega^-$. 
Given a continuous function $u\colon \overline \Omega \to \R$, we write 
\begin{equation*} 
u^+ = u \chi_{\overline\op} \qquad \hbox{and} \qquad  u^- =  u \chi_{\overline\om}.    
\end{equation*}
Formally, weak solutions to~\eqref{eq:main} (see Definition~\ref{def:solution} below) satisfy the transmission problem
\begin{equation} \label{eq:transmission}
    \begin{cases}
    \Delta u = 0 & \hbox{in}~\Omega\setminus\Gamma,\\
    u_{\nu}^{+} - u_{\nu}^{-} = g & \hbox{on}~\Gamma,\\
    u = 0 & \hbox{on}~\partial\Omega,
    \end{cases}
\end{equation}
where $\nu$ is the inward normal vector to $\Omega^+$ and $u_\nu^\pm$ denote the normal derivatives of $u^\pm$.

When $\Gamma$ is a $C^{1,\alpha}$ interface and $g\in C^{0,\alpha}(\Gamma)$ for some $\alpha\in (0,1)$,
Caffarelli, Stinga, and the second author~\cite{CSCS} established that $u^\pm \in C^{1,\alpha}(\overline\Omega^\pm)$.
As a consequence, $u$ is Lipschitz in $\Omega$.
Here, the optimal regularity of $u$ is deduced from the boundary regularity up to $\Gamma$ of the functions $u^\pm$,
which is implied by the transmission condition, i.e., 
the second equation in~\eqref{eq:transmission}. 
Heuristically, problem~\eqref{eq:transmission} can be understood as two separate Neumann problems in $\Omega^+$ and $\Omega^-$
coupled through the transmission condition on the interface $\Gamma$, where $u^+ = u^-$. 
The expected boundary regularity of $u^\pm$ on $\Gamma$ is that of the Neumann problem, which is well studied (see Remark~\ref{rem:counterexample}).
Under the same assumptions, the Lipschitz continuity of solutions to~\eqref{eq:main} has also been obtained by M\"{u}ller~\cite{Muller1, Muller2}
 using different methods. The author pointed out in \cite{Muller2} that this result fails when $\Gamma$ is only Lipschitz.

Our main goal is to find minimal regularity conditions on the data $\Gamma$ and $g$ guaranteeing the Lipschitz continuity of solutions to~\eqref{eq:main}.
The counterexamples in Section~\ref{sec:counterexamples} show
that if $\Gamma$ is only a $C^1$ interface or $g$ is only a continuous function on $\Gamma$, then weak solutions need not be Lipschitz.
To obtain this regularity, a more precise control on the modulus of continuity $\omega$ of $\Gamma$ and $g$ is needed.
We will see that the natural assumption is the following Dini condition:
\begin{equation} \label{eq:dini}
    \int_0^1 \frac{\omega(r)}{r} d r < \infty.
\end{equation}
Under this assumption, we will show that the functions $u^\pm$ are, in fact, differentiable up to the boundary on $\overline\opm$~(see Corollary~\ref{cor1}).
Condition \eqref{eq:dini} is more general than those previously considered, and it is the minimal assumption to have gradient estimates up to the interface, in some sense (see Remarks~\ref{rem:optimal} and \ref{rem:natural}).

The Dini condition has appeared extensively in the literature when studying the boundary behavior of solutions to elliptic boundary-value problems; see~\cite{DEK,DLK,HLW,HLZ,LXZ,MW,WLZ} and the references therein. 
For the classical Dirichlet and Neumann problems in a bounded domain $\Omega$,
boundary Lipschitz estimates have been obtained under an exterior $C^{1,\dini}$ condition on $\partial \Omega$ (see~\cite[Definition~1]{HLW}),
assuming the boundary data to be $C^{1,\dini}$ for the Dirichlet problem, and $C^{0,\dini}$ for the Neumann problem.
If, in addition, $\partial\Omega$ satisfies an interior $C^1$ condition (see~\cite[Definition~2]{HLW}), then it is well-known that solutions are differentiable on $\partial \Omega$. 
Furthermore, under a uniform two-sided $C^{1,\dini}$ condition on $\partial \Omega$ (see \cite[Definition 1.1]{DEK}), the regularity of solutions can be improved to $C^1$ up to the boundary.
For our problem~\eqref{eq:main}, to obtain Lipschitz estimates for $u$ on $\Gamma$, we need both $\Omega^+$ and $\Omega^-$ to satisfy an exterior $C^{1,\dini}$ condition. 
In particular, the interface $\Gamma=\partial\Omega^+$ 
can be written locally as the graph of a (uniformly) $C^{1,\dini}$ function,
and we expect $u^+$ and $u^-$ to be (continuously) differentiable on~$\Gamma$.

 \medskip

Next, we present our main results.
Here, for the definitions of the Dini spaces $C^{1, \dini}$ and $C^{0, \dini}$ and the Dini norm $\|\cdot\|_{C^{0,\dini}}$, we refer the reader to Section~\ref{sec:preliminaries}.

\begin{thm}[Optimal Lipschitz regularity] \label{thm1}
Let $\Gamma$ be a $C^{1, \dini}$ interface and let $g$ be a $C^{0, \dini}$ function on $\Gamma$.
Then the unique weak solution to~\eqref{eq:main} 
is Lipschitz continuous on $\overline{\Omega}$.
Moreover, we have the estimate
\[
\|u \|_{C^{0,1}(\overline{\Omega})} \leq C  \|g\|_{C^{0,\dini}(\Gamma)}, 
\]
where $C>0$ is a constant depending only on $n$, $\Omega$, and $\Gamma$. 
\end{thm}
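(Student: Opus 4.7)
The plan is to reduce the global Lipschitz bound to a uniform pointwise gradient estimate for $u^\pm$ at every point of the interface. Since $u$ is harmonic in $\Omega\setminus\Gamma$ and vanishes on the smooth outer boundary $\partial\Omega$, interior Schauder estimates together with classical boundary regularity for the homogeneous Dirichlet problem control $u$ everywhere away from $\Gamma$. The heart of the argument is therefore to prove, for every $x_0\in\Gamma$, a one-sided bound $|\nabla u^{\pm}(x_0)|\le C\|g\|_{C^{0,\dini}(\Gamma)}$ with $C=C(n,\Omega,\Gamma)$.

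\textbf{Step 1: Local flattening.} Fix $x_0\in\Gamma$, translate and rotate so that $x_0=0$ and $\nu(0)=e_n$. In a fixed neighborhood $\Gamma$ is the graph of a $C^{1,\dini}$ function $\varphi$ with $\varphi(0)=0$, $\nabla\varphi(0)=0$, whose modulus $\omega_\Gamma$ satisfies~\eqref{eq:dini}. A $C^{1,\dini}$ diffeomorphism $\Phi$ straightens $\Gamma$ to $\{y_n=0\}$; the Laplacian becomes $L=a_{ij}(y)\partial_{ij}+b_i(y)\partial_i$ with $a_{ij}(0)=\delta_{ij}$, $b_i(0)=0$ and $|a_{ij}(y)-\delta_{ij}|+|y|\,|b_i(y)|\le C\omega_\Gamma(|y|)$.

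\textbf{Step 2: Subtract a model solution.} Let $\tilde u=u\circ\Phi^{-1}$. I would subtract the explicit one-dimensional profile $v_0(y):=\tfrac{1}{2}g(0)|y_n|$, which is the exact solution of $\Delta v_0=g(0)\,dH^{n-1}\mres\{y_n=0\}$. The remainder $w:=\tilde u-v_0$ is then $L$-harmonic on each side of $\{y_n=0\}$ up to a source term $(L-\Delta)v_0$ of size $\omega_\Gamma(|y|)|g(0)|$, and its transmission jump on $\{y_n=0\}$ is $[w_{y_n}]=\tilde g(y)-g(0)+E(y)$, where the geometric correction $E$ from the change of variables obeys $|E(y)|\le C\omega_\Gamma(|y|)\|g\|_{L^{\infty}(\Gamma)}$. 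Both the source and the new jump are Dini with combined modulus $\omega=\omega_\Gamma+\omega_g$.

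\textbf{Step 3: Green's function representation and estimate.} Using the Green's function $G$ for $L$ in a fixed ball around $0$ (with, say, Dirichlet data on the outer sphere; the contribution of that outer datum is smooth and harmless), I would write $w$ as a layer plus volume potential of its data and differentiate under the integral. The classical pointwise bound $|\nabla_xG(x,y)|\le C|x-y|^{1-n}$, together with the Dini control on the data, yields after integrating over the flattened interface and the half-balls
\[
|\nabla w(0)|\le C\|g\|_{C^{0,\dini}(\Gamma)}\int_0^{r_0}\frac{\omega(r)}{r}\,dr,
\]
which is finite by~\eqref{eq:dini}. Combined with $|\nabla v_0|\le|g(0)|$ and undone by $\Phi$, this produces the desired pointwise gradient bound at $x_0$ from each side.

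\textbf{Main obstacle.} The Dini hypothesis is the \emph{critical} regularity threshold: there is no power-type geometric improvement across scales as in the $C^{1,\alpha}$ setting, so a naive Campanato-style iteration fails. The gradient bound survives only because the Riesz-type integral $\int_0^1 \omega(r)/r\,dr$ converges — which is precisely the Dini condition. The delicate part is to arrange every perturbative error (the drift and variable coefficients from the change of variables, the deviation of $\Gamma$ from its tangent plane, and the oscillation of $g$) so that all three contributions combine into a single singular integral of this Dini-convergent form, with constants independent of the base point $x_0$. Upgrading the bound to full pointwise differentiability of $u^\pm$ on $\overline{\opm}$ further requires the perturbative modulus to vanish as the scale shrinks, so the construction must be genuinely quantitative rather than merely qualitative.
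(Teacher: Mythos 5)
Your overall strategy---localize near a point of $\Gamma$, reduce to a one-sided pointwise gradient bound at interface points, subtract the exact one-dimensional model $v_0=\tfrac12 g(0)|y_n|$, and trade off all perturbative errors against the convergence of $\int_0^1\omega(r)/r\,dr$---correctly identifies where the Dini hypothesis enters. It is, however, a genuinely different route from the paper, which never flattens the interface: the paper works directly with the Laplacian and its Green's function in balls, proves only a \emph{sup-norm} bound $\sup_{B_{\rho r}}|w|\le C r\omega(r)$ on the error against the flat model (Lemma~\ref{keylemma}), and extracts the gradient information through a Caffarelli-type iteration (Theorem~\ref{thm3}, then Theorem~\ref{thm2}). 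That iteration has a second payoff: it delivers the quantitative modulus $\sigma(r)\to0$, which is what gives the piecewise differentiability of $u^\pm$ in Corollary~\ref{cor1}, not merely a uniform Lipschitz bound.

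Your proposal has a concrete gap in Step~1. Under the $C^{1,\dini}$ hypothesis, the flattening map $\Phi$ (essentially $(y',y_n)\mapsto(y',y_n+\varphi(y'))$ with $\varphi\in C^{1,\dini}$) has only one order of differentiability with a Dini modulus. The transformed operator $u\mapsto\Delta u$ therefore does \emph{not} take the non-divergence form $a_{ij}\partial_{ij}+b_i\partial_i$: the coefficients $b_i$ would require second derivatives of $\varphi$, which do not exist. What you actually obtain is a divergence-form operator $\partial_i\big(a_{ij}\partial_j\,\cdot\,\big)$ with $a_{ij}$ Dini continuous and no lower-order terms. This changes the representation formula nontrivially: the volume source $(L-\Delta)v_0$ in your Step~2 is then a distributional derivative of a Dini function rather than a function, so when you write $w$ via the Green's function you must integrate by parts, picking up $\nabla_yG$ rather than $G$ against that source. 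Differentiating in $x$ then produces $\nabla_x\nabla_yG$, a kernel of homogeneity $|x-y|^{-n}$, and the resulting integral is a genuine principal-value singular integral against a merely Dini density. Likewise, ``differentiating under the integral'' for the layer potential at $x=0$ (a point of the support of the measure) requires the jump relations of single-layer potentials and a principal-value argument for the tangential derivatives. None of these steps is false, but each requires real work (gradient and Hessian estimates for the Green's function of a divergence-form operator with Dini coefficients, and Calder\'on--Zygmund-type cancellation) that the paper avoids entirely by never changing variables and by estimating only $|w|$, not $|\nabla w|$. If you want to pursue your direct potential-theoretic route, you should rewrite Step~1 in divergence form and import the relevant Green's-function bounds explicitly; alternatively, you could keep the Laplacian, not flatten, and compare $\Delta u=g\,dH^{n-1}\mres\Gamma$ directly to $\Delta v_0=g(0)\,dH^{n-1}\mres\{x_n=0\}$ as the paper does, which sidesteps the variable-coefficient machinery altogether.
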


By classical interior and boundary estimates for harmonic functions, it suffices to prove the Lipschitz estimate near $\Gamma$. 
Theorem~\ref{thm1} will be a consequence of the following pointwise gradient estimates on this interface.
Again, we give the definitions of the \emph{pointwise} Dini spaces $C^{1,\dini}(x_0)$, $C^{0,\dini}(x_0)$, and their associated norms in Section~\ref{sec:preliminaries}.
Below, $\omega_{f,x_0}$ always denotes the modulus of continuity of a function $f$ at the point $x_0$ 
(see Definitions~\ref{def:g} and~\ref{def:dinispaces}).

\begin{thm}[Pointwise gradient estimates on $\Gamma$] \label{thm2}
Let $\Gamma = \{x = (x', x_n)\in B_1 : x_n = \psi(x')\}$ for some function $\psi \colon \R^{n-1} \to \R$,
and let $g$ be a function on $\Gamma$.
Assume that 
$0\in \Gamma$, $\psi \in C^{1,\dini}(0)$, and $g\in C^{0,\dini}(0)$,
and let $u \in C(B_1)$ be a bounded distributional solution to 
\[
\Delta u = g \, \dhr \qquad \hbox{in}~B_1.
\]
Then there exist linear polynomials $l^\pm(x)= a^\pm \cdot x + b$,  with $a^\pm \in \Rn$ and $b\in \R$, 
a radius $0 < R \leq 1$ depending only on $\gradiniz$,
and a function $\sigma(r)$ depending only on $n$, 
$\|u\|_{L^{\infty}(B_1)}$, $|g(0)|$, $\gradiniz$, and $\gdiniz$, with $\sigma(r) \to 0$ as $r \to 0$, 
such that
\[
\sup_{\opm \cap B_r} |u^\pm - l^\pm| \leq r \sigma(r) \qquad \text{ for all } r \leq R.
\]
Furthermore, we have that
\[
a^+-a^-=g(0)e_n
\]
and the following estimate holds:
\[
R|a^{+}| + R|a^{-}| + |b| \leq C \left(\|u\|_{L^{\infty}(B_R)} + R|g(0)| + R \int_{0}^{R} \frac{\gdiniz(r)}{r} dr \right),
\]
for some constant $C > 0$ depending only on $n$.
\end{thm}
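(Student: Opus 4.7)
The plan is to prove the pointwise estimate via a Dini-type iteration at dyadic scales: I construct affine approximations $l_k^\pm$ at scales $r_k=\lambda^k$ by comparing $u$, at each scale, with the solution of a ``frozen'' flat-interface/constant-density transmission problem, and then pass to the limit as $k\to\infty$. After a rotation I may assume $\nabla\psi(0)=0$, so the tangent hyperplane to $\Gamma$ at $0$ is $T=\{x_n=0\}$; subtracting $u(0)$ and rescaling I also assume $\|u\|_{L^\infty(B_1)}+\|g\|_{C^{0,\dini}(0)}\le 1$.

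For the one-step improvement at scale $r_k$, let $v_k$ solve
\[
\Delta v_k = g(0)\, dH^{n-1}\mres T \quad \text{in } B_{r_k}, \qquad v_k = u \text{ on } \partial B_{r_k}.
\]
Since $\Delta(|x_n|/2)=dH^{n-1}\mres T$ distributionally in $\R^n$, I may decompose $v_k = g(0)|x_n|/2 + h_k$ with $h_k$ harmonic in $B_{r_k}$. Interior Hessian estimates for $h_k$ give piecewise affine approximants $L_k^\pm(x)=a_{v,k}^\pm\cdot x + b_{v,k}$ with $|v_k^\pm - L_k^\pm|\le C\lambda^2 r_k$ on $\opm\cap B_{\lambda r_k}$ and $a_{v,k}^+ - a_{v,k}^- = g(0)e_n$. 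The error $w_k := u - v_k$ vanishes on $\partial B_{r_k}$ and solves $\Delta w_k = \mu_k$ where
\[
\mu_k = (g-g(0))\,\dhr + g(0)\bigl[dH^{n-1}\mres\Gamma - dH^{n-1}\mres T\bigr]\quad\text{in } B_{r_k}.
\]
Via the Green's function representation and surface Riesz-potential bounds $\int_{B_r}|x-y|^{2-n}d|\mu|(y)\lesssim r$ for measures of surface type, the expected bound is $\|w_k\|_{L^\infty(B_{r_k})}\le Cr_k\,\tau(r_k)$ with $\tau(r):=\omega_\psi(r)+\omega_g(r)$.

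Choosing $\lambda\in(0,1/4)$ so small that $C\lambda^2\le\lambda/2$, the comparison yields affine $l_k^\pm$ at scale $r_{k+1}=\lambda r_k$ with approximation error at most $\tfrac12 r_{k+1}\cdot(\text{previous excess}) + Cr_{k+1}\lambda^{-1}\tau(r_k)$ and slope increments $|a_{k+1}^\pm - a_k^\pm|=O(\tau(r_k))$; continuity of $u$ across $\Gamma$ forces a common intercept $b_k$. The Dini condition $\int_0^1\tau(r)/r\,dr<\infty$ makes the series $\sum_k \tau(r_k)$ convergent, so $\{a_k^\pm\}$ and $\{b_k\}$ are Cauchy, their limits $a^\pm,b$ satisfy $a^+ - a^- = g(0)e_n$, and setting $\sigma(r)$ to be the Dini tail of $\tau$ and interpolating between consecutive dyadic scales produces the stated estimate. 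The bound on $|a^\pm|+|b|+c$ follows by summing the geometric series of increments against the initial normalization.

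The main obstacle is the perturbation bound for $w_k$, specifically controlling the geometric measure $dH^{n-1}\mres\Gamma - dH^{n-1}\mres T$: its \emph{total variation} is not of size $\omega_\psi$, only the oriented difference is, because the two surfaces are $O(s\omega_\psi(s))$-close vertically at scale $s$. Extracting the gain $\omega_\psi(r_k)$ requires exploiting this cancellation quantitatively, e.g.\ by parametrizing $\Gamma$ as a graph over $T$ and showing that the normal derivative of the Green's function integrated over the thin region between the two surfaces is $O(r_k\,\omega_\psi(r_k))$. This is where the $C^{1,\dini}$ hypothesis on $\psi$ enters and where the fine Green's function analysis announced in the abstract becomes indispensable.
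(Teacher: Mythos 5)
Your proposal follows essentially the same route as the paper: a dyadic frozen-coefficient iteration (the paper's Theorem~\ref{thm3}) followed by a Green's-function perturbation estimate for $w_k$ (the paper's Lemma~\ref{keylemma}), and your sketch of how to extract the $\omega_\psi(r_k)$ gain from the cancellation in $dH^{n-1}\mres\Gamma - dH^{n-1}\mres\{x_n=0\}$ --- parametrize $\Gamma$ as a graph over the tangent plane, apply the fundamental theorem of calculus in the vertical variable, and integrate $|\partial_{x_n}\Phi|$ over the thin slab of thickness $\lesssim r\,\omega_\psi(r)$ via Fubini --- is exactly how the paper's Step~2 of Lemma~\ref{keylemma} proceeds. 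The remaining differences (using $g(0)|x_n|/2$ as a particular solution of the flat problem instead of carrying the piecewise-affine $p_k$, writing $\omega_\psi+\omega_g$ instead of $\max\{\omega_\psi,\omega_g\}$) are cosmetic, and your Cauchy-sequence argument via Dini summability of $\tau(r_k)$ matches the paper's use of Lemma~\ref{lem:summable} to conclude.
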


The following result follows directly from Theorem~\ref{thm2}, noting that $a^\pm = \nabla u^\pm(0)$, so the proof is omitted. 

\begin{cor}[Pointwise differentiability] \label{cor1}
Let $\Gamma$ be $C^{1, \dini}(x_0)$ (i.e., $\Gamma$ is the graph of a $C^{1,\dini}(x_0')$ function; see Section~\ref{sec:preliminaries}) and let $g \in C^{0, \dini}(x_0)$ for some $x_0 \in \Gamma$.
Then the weak solution $u$ to~\eqref{eq:main} is smooth in $\overline\Omega\setminus\Gamma$ and the functions $u^\pm: \overline{\opm} \to \R$ are differentiable at $x_0$.
In particular, if the above conditions are satisfied at every point $x_0 \in \Gamma$, then the functions $u^{\pm}$ are differentiable up to the boundary.
\end{cor}

Assuming a \emph{uniform} Dini condition on $\Gamma$ and $g$ (see Definitions~\ref{def:g}~\ref{def:gunif} and~\ref{def:dinispaces}~\ref{def:1dini}), we obtain that $u^\pm$ are \textit{continuously} differentiable up to the boundary.
The proof is based on a rather standard argument of patching the interior estimates for harmonic functions and the boundary estimates given in Theorem~\ref{thm2}, and follows from an application of Proposition~\ref{prop:patching} (see Appendix~\ref{sec:c1}).

\begin{cor}[Piecewise $C^1$ up to the boundary]\label{cor:piece}
Let $\Gamma$ be a $C^{1, \dini}$ interface and let $g$ be a $C^{0, \dini}$ function on $\Gamma$.
Let $u$ be the weak solution to~\eqref{eq:main}.
Then $u^\pm \in C^1(\overline{\Omega^{\pm}})$.
\end{cor}

\begin{rem}
\label{rem:dong}
We point out that Corollary~\ref{cor:piece} has been obtained by Dong~\cite{D} for operators in divergence form with Dini mean oscillation coefficients.
In particular, the global Lipschitz continuity in Theorem~\ref{thm1} can be deduced from the results in \cite{D}. 
Nonetheless, our approach is based on novel pointwise estimates on the interface (Theorem~\ref{thm2}) and, additionally, 
we provide sharp counterexamples highlighting the minimality of the assumptions (see Section~\ref{sec:counterexamples}).
Our Corollary~\ref{cor1} is interesting in its own right and does not follow from the approach in \cite{D}.
\end{rem}

Our strategy to prove Theorem~\ref{thm2} is inspired by an approximation technique developed by Caffarelli~\cite{C1,C2} in the context of fully nonlinear elliptic equations.
In our setting, the main idea is to view the equation $\Delta u= g \, \dhr$ as a perturbation of the ``flat'' problem $\Delta v = g(0) \mes{T_0 \Gamma}$ in a small ball $B_r$ centered at $0 \in \Gamma$, 
where $T_0 \Gamma$ denotes the tangent plane of $\Gamma$ at $0$.
The key step is to show that if $u=v$ on $\partial B_r$, then the error $u-v$ becomes sufficiently small in a smaller ball $B_{\rho r}$, with $\rho <1$. 
An iteration procedure then allows us to transfer the regularity of the well-behaved function $v$ to the solution $u$. 
The principal difficulty in estimating the size of $u-v$ lies in the fact that $\Delta u$ and $\Delta v$ are supported on different surfaces, $\Gamma$ and $T_0 \Gamma$, respectively.
Hence, it is not straightforward to compare these distributions. 
Nonetheless, since $\Gamma \sim T_0 \Gamma$ and $g\sim g(0)$ in $B_r$ as $r\to 0$, we expect $|u-v|\sim 0$.
This is the content of our key Lemma~\ref{keylemma}, 
where we quantify the closeness of $u$ and $v$ in terms of the moduli of continuity of $\Gamma$ and $g$.
We give a direct proof
using the representation formula for solutions to Poisson's equation in terms of the Green's function of the Laplacian (see Theorem~\ref{thm:existence}). 
We point out that this method is different than the one in~\cite{CSCS}, where the mean value property and the maximum principle are used.\medskip

Our results for the Laplacian are expected to be true for more general operators in divergence form.
For divergence operators with $C^{\alpha}$ coefficients, assuming $\Gamma \in C^{1,\alpha}$ and $g\in C^{\alpha}$, M\"uller~\cite{Muller3} has proved the Lipschitz continuity of the solution, while Dong~\cite{D} has further obtained the piecewise $C^{1,\alpha}$ regularity (see also Remark~\ref{rem:dong}).
Our key Lemma~\ref{keylemma} relies on estimates for both the Green's function and the fundamental solution of the Laplacian.
It would be natural to try to use fine estimates for the Green's function of general operators, as in the work of Kim and Sakellaris~\cite{KS} for divergence operators with Dini mean oscillation coefficients. 
This is an open problem.\medskip

The paper is organized as follows.
In Section~\ref{sec:preliminaries}, we introduce the necessary notation and definitions that we will use throughout this paper, as well as some preliminary results. 
Section~\ref{sec:counterexamples} provides counterexamples to Lipschitz regularity when $\Gamma$ is only a $C^1$ interface or $g$ is merely continuous. 
In Section~\ref{sec:key}, we prove the key error estimate using Green's function for the Laplacian. 
Finally, we give the proof of our main results in Section~\ref{sec:proofs}. 
The appendices contain a useful abstract result regarding the $C^1$ regularity up to the boundary (see Appendix~\ref{sec:c1}), and a counterexample illustrating the difference between pointwise and uniform Dini spaces (see Appendix~\ref{sec:pointwise}).

\section{Preliminaries} 
\label{sec:preliminaries}

Given a point $x\in \Rn$, we write $x=(x',x_n)$, where $x'\in \R^{n-1}$ and $x_n \in \R$. 
For $r>0$ and $x_0\in \Rn$, we denote the Euclidean ball of radius $r$ centered at $x_0$
by $B_r(x_0) = \{x \in \R^n \colon |x-x_0| < r\}$ in $\R^n$ and $B_r'(x_0')=\{x' \in \R^{n-1} \colon |x'-x_0'|<r\}$ in $\R^{n-1}$. If $x_0=0$, we simply write $B_r$ and $B_r'$.
 
\begin{defn}\label{def:dini}
    A continuous nondecreasing function $\omega : [0,\infty)\to [0,\infty)$ is called a \emph{Dini function}
    if there exists some $R_0>0$ such that
    $$
\int_0^{R_0} \frac{\omega(r)}{r} \, dr<\infty.
    $$
\end{defn}

Note that the H\"{o}lder modulus $\omega(r)=r^\alpha$ is a Dini function, for any  $\alpha \in (0,1)$.
The Dini class also includes logarithmic powers of the form $\omega(r)= |\log r|^{-1-\alpha}$, with $\alpha > 0$.\medskip

The following characterization will be useful in future proofs.

\begin{lem}\label{lem:summable}
A continuous and nondecreasing function $\omega : [0,\infty)\to [0,\infty)$
is a Dini function if and only if
there is a $R_0 > 0$ such that
\begin{equation*} 
\sum_{j=0}^\infty \omega(\rho^j R_0) < \infty \qquad \text{ for all }\rho\in (0,1).
\end{equation*}
\end{lem}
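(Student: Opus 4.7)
The plan is to prove both implications simultaneously by sandwiching the integral $\int_0^1 \omega(r)/r\,dr$ between two series of the form $\sum_j \omega(\rho^j)$, exploiting only the monotonicity of $\omega$ on each dyadic-like annulus $[\rho^{j+1}, \rho^j]$. Since the geometric sequence $\{\rho^j\}_{j\geq 0}$ decreases to $0$ and partitions $(0,1]$ into intervals of logarithmic length $\log(1/\rho)$, the decay of $\omega(r)/r$ is essentially captured by its values at the endpoints.

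First I would fix an arbitrary $\rho \in (0,1)$ and decompose
\[
\int_0^1 \frac{\omega(r)}{r}\,dr = \sum_{j=0}^\infty \int_{\rho^{j+1}}^{\rho^j} \frac{\omega(r)}{r}\,dr.
\]
On each interval $[\rho^{j+1}, \rho^j]$, the monotonicity of $\omega$ gives $\omega(\rho^{j+1}) \leq \omega(r) \leq \omega(\rho^j)$, and a direct computation yields $\int_{\rho^{j+1}}^{\rho^j} dr/r = \log(1/\rho)$. Combining these observations produces the two-sided estimate
\[
\log(1/\rho) \sum_{j=0}^\infty \omega(\rho^{j+1}) \;\leq\; \int_0^1 \frac{\omega(r)}{r}\,dr \;\leq\; \log(1/\rho) \sum_{j=0}^\infty \omega(\rho^j).
\]

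From this sandwich the equivalence is immediate. If $\omega$ is Dini, then the upper bound forces $\sum_{j\geq 1}\omega(\rho^j) = \sum_{j\geq 0}\omega(\rho^{j+1}) < \infty$, and adding the single finite term $\omega(1)$ gives the series in the statement. Conversely, if the series $\sum_{j\geq 0}\omega(\rho^j)$ converges for some (hence, a fortiori, for the chosen) $\rho \in (0,1)$, the upper bound shows the integral is finite, so $\omega$ is Dini. I expect no real obstacle here: the statement is a standard Cauchy condensation-type argument, and the only point requiring mild care is to make sure the decomposition covers $(0,1]$ and to observe that convergence of the series for a single $\rho \in (0,1)$ already implies the Dini property (and therefore convergence for every other $\rho' \in (0,1)$, closing the logical loop with the ``for all $\rho$'' quantifier in the statement).
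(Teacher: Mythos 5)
Your argument is correct and follows essentially the same Cauchy-condensation approach as the paper: partition $(0,1]$ into the intervals $[\rho^{j+1},\rho^j]$, use monotonicity of $\omega$ to sandwich the integral between $\log(1/\rho)\sum_j\omega(\rho^{j+1})$ and $\log(1/\rho)\sum_j\omega(\rho^j)$ (the paper merely replaces $\log(1/\rho)$ by the cruder bounds $1-\rho$ and $1/\rho$), and then read off both implications. One small slip: in the direction ``Dini $\Rightarrow$ series converges'' you invoke ``the upper bound,'' but what you actually need is the \emph{left-hand} inequality of the sandwich (the series is bounded above by the integral); the mathematics is right, only the label is off.
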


\begin{proof}
By monotonicity, on the one hand, we have that
\[
\int_0^{R_0} \frac{\omega(r)}{r} d r =  \sum_{j=0}^\infty \int_{\rho^{j+1}{R_0}}^{\rho^j{R_0}} \frac{\omega(r)}{r}\, dr \leq 
\sum_{j=0}^\infty \omega(\rho^{j}{R_0}) \int_{\rho^{j+1}{R_0}}^{\rho^j{R_0}} \frac{1}{r}\, dr
\leq \frac{1}{\rho} \sum_{j=0}^\infty  \omega(\rho^j{R_0}),
\]
and on the other hand
\[
\int_0^{R_0} \frac{\omega(r)}{r} d r \geq
\sum_{j=0}^\infty \omega(\rho^{j+1}{R_0}) \int_{\rho^{j+1}{R_0}}^{\rho^j{R_0}} \frac{1}{r}\, dr
\geq (1-\rho) \sum_{j = 1}^\infty  \omega(\rho^j{R_0}).
\]
It follows that the integral and the series are comparable for each $\rho \in (0,1)$, and hence the two conditions are equivalent.
\end{proof}

Next, we give the definitions of the pointwise and uniform Dini spaces.

\begin{defn}
\label{def:g}
Let $g$ be a continuous function defined on a bounded $C^1$ hypersurface $\Gamma \subset \R^n$.
For $x_0\in \Gamma$, we define the modulus of continuity of $g$ at $x_0$ as
$$
\omega_{g,x_0}(r) := \sup_{x\in \Gamma \cap B_r(x_0)} |g(x)-g(x_0)| \quad \hbox{for}~r > 0.
$$
\begin{enumerate}[label=(\roman*)]
\item We say that $g$ is \emph{Dini continuous at $x_0$}, and write $g\in C^{0,\dini}(x_0)$, if $\omega_{g,x_0}$ is a Dini function. 
Moreover, we define 
\[
\|g\|_{C^{0,\dini}(x_0)} :=|g(x_0)| + [g]_{C^{0,\dini}(x_0)},
\]
where the seminorm is given by
$$
[g]_{C^{0,\dini}(x_0)} := \int_0^{R_{0}} \frac{\omega_{g,x_0}(r)}{r}\, dr,
$$
with $R_0 > 0$ as in Definition~\ref{def:dini}.
\item 
\label{def:gunif}
We say that $g$ is \emph{Dini continuous on $\Gamma$}, and write $g\in C^{0,\dini}(\Gamma)$, if 
$$\omega_g(r) := \sup_{x_0\in \Gamma} \omega_{g,x_0}(r)$$ is a Dini function. Moreover, we define 
\[
\|g\|_{C^{0,\dini}(\Gamma)} :=\|g\|_{L^\infty(\Gamma)} + [g]_{C^{0,\dini}(\Gamma)},
\]
where the seminorm is defined as above, replacing $\omega_{g,x_0}$ with $\omega_g$.
\end{enumerate}
\end{defn}

\begin{rem} \label{rem:dini}
Note that $g \in C^{0,\dini}(x_0)$ for all $x_0 \in \Gamma$ is a weaker property than being in $C^{0,\dini}(\Gamma)$. 
We give an example illustrating this fact in Lemma~\ref{lem:dini} (see Appendix~\ref{sec:pointwise}).
\end{rem}

\begin{defn} \label{def:dinispaces}
Let $\Gamma\subset \R^n$ be a bounded $C^1$ hypersurface.
\begin{enumerate}[label=(\roman*)]
\item 
For $x_0 \in \Gamma$, we say that $\Gamma$ is $C^{1,\dini}(x_0)$
if there is a radius $r_{0} > 0$ and a $C^1$ function $\psi = \psi_{x_0} \colon \R^{n-1} \to \R$ such that, after a rotation of the coordinates, we have 
\[
\Gamma \cap B_{r_0}(x_0) = \big\{ x = (x',x_n) \in B_{r_0}(x_0) : x_n = \psi(x') \big\},
\]
with 
$\nabla \psi \in C^{0,\dini}(x_0')$,
that is,
$$
\omega_{\nabla \psi,x_0}(r) := \sup_{x\in \Gamma \cap B_r(x_0)} |\nabla \psi (x')-\nabla \psi(x'_0)| \quad \hbox{for}~r > 0
$$
is a Dini function.
We write $\psi \in C^{1,\dini}(x_0')$.
\item 
\label{def:1dini}
We say that $\Gamma$ is a $C^{1,\dini}$ interface if $\Gamma$ is $C^{1,\dini}(x_0)$ for all $x_0 \in \Gamma$, with $r_0 >0$ above independent of $x_0$, and with
\[
\omega_{\Gamma}(r) := \sup_{x_0 \in \Gamma} \omega_{\nabla \psi_{x_0}, x_0} (r)
\]
a Dini function.
\end{enumerate}
\end{defn}

\begin{rem}
By a rotation, we may assume that $\nabla \psi(x_0') = 0$, after
taking $r_0 > 0$ smaller if necessary.
We will always assume this condition.
\end{rem}

Finally, we discuss several important facts concerning Poisson's equation with a surface measure.
The equation in~\eqref{eq:main} can be understood in the sense of distributions as follows:
\begin{defn}[Weak solution]\label{def:solution}
We say that $u\in C(\Omega)$ is a \textit{distributional solution} to the equation 
\begin{equation} \label{eq:disteq}
\Delta u = g \mes{\Gamma} \quad\hbox{in}~\Omega
\end{equation}
 if for any $\varphi\in C^\infty_c(\Omega)$, we have
$$\int_\Omega u\Delta\varphi\,dx=\int_\Gamma g\varphi\,d H^{n-1}.$$
Furthermore, we say that $u \in C(\overline\Omega)$ is a \textit{weak solution} to the problem~\eqref{eq:main} if $u$ is a distributional solution to \eqref{eq:disteq} and $u=0$ on $\partial \Omega$. 
\end{defn}

Existence, uniqueness, and Log-Lipschitz regularity of weak solutions to~\eqref{eq:main} was established in~\cite{CSCS} using Green's functions. 
We recall that, for $x$ and $y$ in $\Omega$ with $x \neq y$, the Green's function for the Laplacian in $\Omega$ has the form
$$
G(x,y)= \Phi(x-y) - h^x(y).
$$
Here, $\Phi(x)$ is the fundamental solution of the Laplacian (with $\Delta \Phi=\delta_0)$ given by
$$
\Phi(x) = 
\begin{cases}
\frac{1}{2\pi}\log|x| & \hbox{if}~n=2,\\
\frac{1}{(2-n)\alpha_{n-1}} \frac{1}{|x|^{n-2}} & \hbox{if}~n\geq 3,
\end{cases}
$$
where we have written $\alpha_{n-1}:=H^{n-1}(\partial B_1)$,
and $h^x$ is the harmonic extension of $\Phi(x - \cdot)|_{\partial \Omega}$ to $\Omega$. For completeness, we state here the result. 

\begin{thm}[Theorem~2.2 in~\cite{CSCS}] \label{thm:existence}
Let $\Omega\subset \Rn$, with $n\geq 2$, be a smooth bounded domain. Let $\Gamma$ be a Lipschitz interface and $g\in L^\infty(\Gamma)$.
Then the unique weak solution $u\in C(\overline\Omega)$ to~\eqref{eq:main} is given by
\begin{equation*}
u(x)=\int_\Gamma G(x,y)g(y)\,dH^{n-1}(y) \qquad\hbox{for}~x\in \Omega,
\end{equation*}
where $G(x,y)$ is the Green's function for the Laplacian in $\Omega$. 
Furthermore, $u\in {\rm LogLip}(\overline\Omega)$ and
$$\|u\|_{L^\infty(\Omega)}\ + [u]_{{\rm LogLip}(\overline{\Omega})}\leq C\|g\|_{L^\infty(\Gamma)},$$
where $C>0$ depends only on $n$, $\Omega$, and $\Gamma$.
\end{thm}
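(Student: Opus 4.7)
The plan is to verify that the explicit formula defines a function with the claimed properties, and then to derive the Log-Lipschitz bound directly from a near/far decomposition of the surface integral. I would organize the argument in three stages: representation, uniqueness, and the quantitative bound.

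For the representation, I would first check that $u(x) := \int_\Gamma G(x,y) g(y)\, dH^{n-1}(y)$ is well defined and continuous on $\overline{\Omega}$, vanishes on $\partial \Omega$ (since $G(\cdot,y)\equiv 0$ there and $\Gamma$ is compactly contained in $\Omega$, so dominated convergence applies), and satisfies the distributional equation. The latter follows from Fubini: for every $\varphi \in C_c^\infty(\Omega)$,
\[
\int_\Omega u \Delta \varphi\, dx = \int_\Gamma g(y) \int_\Omega G(x,y) \Delta \varphi(x)\, dx\, dH^{n-1}(y) = \int_\Gamma g(y) \varphi(y)\, dH^{n-1}(y),
\]
using the defining property $\Delta_x G(\cdot, y) = \delta_y$. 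Uniqueness is standard: the difference of two distributional solutions is bounded, distributionally harmonic in $\Omega$, and continuously vanishes on $\partial \Omega$, hence is identically zero by Weyl's lemma and the maximum principle.

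The heart of the matter is the Log-Lipschitz estimate. Setting $r := |x_1 - x_2|$ and $x_0 := (x_1 + x_2)/2$, I would bound
\[
|u(x_1) - u(x_2)| \leq \|g\|_{L^\infty(\Gamma)} \int_\Gamma |G(x_1,y) - G(x_2,y)|\, dH^{n-1}(y),
\]
and split $G(x,y) = \Phi(x-y) - h^x(y)$. Since $\Omega$ is smooth and $\Gamma \Subset \Omega$, standard boundary regularity for the harmonic extension gives a uniform Lipschitz bound $|h^{x_1}(y) - h^{x_2}(y)| \leq Cr$ for $y \in \Gamma$, producing a contribution of order $r$. For the Newtonian part, I decompose $\Gamma$ into $\Gamma_{\mathrm{near}} := \Gamma \cap B_{3r}(x_0)$ and $\Gamma_{\mathrm{far}} := \Gamma \setminus B_{3r}(x_0)$. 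On $\Gamma_{\mathrm{near}}$ I bound each term separately by $|\Phi(x_i - y)| \leq C|y - x_i|^{2-n}$; combining this with Ahlfors regularity of the Lipschitz interface, $H^{n-1}(\Gamma \cap B_s(z)) \leq Cs^{n-1}$, and summing a geometric series over dyadic annuli centered at $x_i$ yields a contribution of order $r$. On $\Gamma_{\mathrm{far}}$ the mean value theorem gives $|\Phi(x_1 - y) - \Phi(x_2 - y)| \leq Cr\,|y - x_0|^{1-n}$; integrating dyadically over annuli centered at $x_0$ between scales $3r$ and $\operatorname{diam}(\Omega)$ produces the telescoping bound $Cr \sum_{k=0}^{\lfloor \log_2(\operatorname{diam}(\Omega)/r)\rfloor} 1 \leq Cr|\log r|$.

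The principal technical obstacle is organizing this annular decomposition so that the logarithmic factor emerges cleanly: the far-field estimate is borderline divergent, since integrating $|y - x_0|^{1-n}$ against an $(n-1)$-dimensional surface produces exactly a logarithm, while the near-field contribution is just integrable thanks to Ahlfors regularity. Both estimates rely crucially on the Lipschitz hypothesis on $\Gamma$, which supplies the requisite uniform $(n-1)$-dimensional density bound. The $L^\infty$ bound for $u$ follows from the same near/far splitting applied to $G(x,y)$ itself rather than to its increment, and summing the two contributions yields $\|u\|_{L^\infty(\Omega)} + [u]_{\mathrm{LogLip}(\overline{\Omega})} \leq C\|g\|_{L^\infty(\Gamma)}$ with $C$ depending only on $n$, $\Omega$, and $\Gamma$.
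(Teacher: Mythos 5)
Your argument is correct and follows the standard potential-theoretic route; note that the paper does not prove this statement but cites it as Theorem~2.2 of~\cite{CSCS}, so there is no in-text proof to compare against. Two points deserve tightening. First, the uniform Lipschitz bound $|h^{x_1}(y)-h^{x_2}(y)|\leq Cr$ for $y\in\Gamma$ is most transparently justified via the symmetry $h^{x}(y)=h^{y}(x)$: for $y\in\Gamma$, the map $x\mapsto h^{y}(x)$ is the harmonic extension in $\Omega$ of the boundary datum $\Phi(y-\cdot)|_{\partial\Omega}$, which has $C^1(\overline\Omega)$ norm uniform in $y$ precisely because $\dist(\Gamma,\partial\Omega)>0$; this is what makes the appeal to boundary regularity valid all the way up to $\partial\Omega$. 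Second, in dimension $n=2$ the pointwise bound $|\Phi(x_i-y)|\leq C|y-x_i|^{2-n}$ degenerates to a constant and misses the logarithmic singularity of $\Phi(z)=\tfrac{1}{2\pi}\log|z|$; the dyadic near-field sum then yields $Cr(1+|\log r|)$ rather than $Cr$, which is still compatible with the asserted Log-Lipschitz modulus but requires a separate line. The remaining ingredients---Fubini for the distributional identity, Weyl's lemma plus the maximum principle for uniqueness, upper Ahlfors regularity of the Lipschitz interface for the near field, and the telescoping count of $O(|\log r|)$ dyadic annuli for the far field---are exactly right and assemble into the claimed estimate.
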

Recall that a bounded function $u:\overline{\Omega}\to\R$ is said to be in the space ${\rm LogLip}(\overline\Omega)$ if
$$[u]_{{\rm LogLip}(\overline\Omega)}=\sup_{\substack{x,y\in\overline\Omega\\x\neq y}}\frac{|u(x)-u(y)|}{|x-y||\log|x-y||}<\infty.$$

\section{Counterexamples to Lipschitz regularity} \label{sec:counterexamples}

In this section, we prove that if $\Gamma$ is only a $C^1$ interface or $g$ is merely continuous, then weak solutions to~\eqref{eq:main} need not be Lipschitz. 
The counterexamples hold for $n\geq2$.

\begin{thm}[$\Gamma$ only $C^1$]\label{thm:counterexample}
Let $\Omega = B_1 \subset \R^n$, with $n \geq 2$.
Let $\Gamma \subset B_1$ be any closed $C^1$ hypersurface satisfying
\[
\Gamma \cap B_{1/4} = \left\{x =(x', x_n) \in \R^n \colon x_n = \frac{|x'|}{|\log|x'||}, \quad x' \in B_{1/4}' \subset \R^{n-1} \right\},
\]
and let $g \equiv 1$.
Then the unique weak solution $u$ to~\eqref{eq:main} is not locally Lipschitz in $B_1$.
\end{thm}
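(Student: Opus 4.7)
The plan is to show that the normal derivative of the solution at the origin tends to $-\infty$ along the positive $x_n$-axis, which rules out local Lipschitz regularity. By Theorem~\ref{thm:existence}, the unique distributional solution admits the representation
\[
u(x) = \int_\Gamma G(x, y)\, dH^{n-1}(y), \qquad G(x, y) = \Phi(x-y) - h(x, y),
\]
and since $\Gamma$ is a compact subset of $B_1$ bounded away from $\partial B_1$, the map $x\mapsto \int_\Gamma h(x, y)\, dH^{n-1}(y)$ is smooth in a neighborhood of $0$. Therefore, it suffices to prove that the single-layer potential
\[
v(x) := \int_\Gamma \Phi(x-y)\, dH^{n-1}(y)
\]
is not Lipschitz at $0$. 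The rotational symmetry of $\Gamma$ in the $x'$ variables forces $\partial_{x_i} v(0, t) = 0$ for all $i < n$, so the goal reduces to contradicting boundedness of $\partial_{x_n} v(0, t)$ as $t \to 0^+$.

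Differentiating under the integral, parametrizing $\Gamma \cap B_{1/4}$ as $y = (y', \psi(y'))$ with $\psi(y') = |y'|/|\log|y'||$, and noting that the contribution of $\Gamma \setminus B_{1/4}$ remains uniformly bounded for $t$ small, the task becomes to show that
\[
I(t) := \int_{B'_{1/4}} \frac{t - \psi(y')}{(|y'|^2 + (t-\psi(y'))^2)^{n/2}}\sqrt{1 + |\nabla\psi(y')|^2}\, dy'
\]
diverges to $-\infty$ as $t \to 0^+$. Passing to polar coordinates in $\R^{n-1}$ (thanks to the radial symmetry of $\psi$) and splitting at $r = t$, the inner region $0 < r < t$ contributes $O(1)$ because the integrand is dominated there by $r^{n-2}/t^{n-1}$. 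For the outer region, the expansions $(r^2 + (t - \psi(r))^2)^{-n/2} = r^{-n}(1 + O(1))$ and $\sqrt{1 + |\nabla\psi|^2} = 1 + O(|\log r|^{-2})$ reduce the analysis, up to a uniformly bounded remainder, to
\[
c_n \int_t^{1/4} \frac{t - \psi(r)}{r^2}\, dr = c_n\!\left(1 - 4t - \int_t^{1/4} \frac{dr}{r\,|\log r|}\right),
\]
for some $c_n > 0$. Since $\int_t^{1/4} dr/(r|\log r|) = \log|\log t| - \log\log 4 \to +\infty$ as $t \to 0^+$, we conclude $\partial_{x_n} v(0, t) \to -\infty$, and combined with $u \in C(\overline{B_1})$ this yields $|u(0, t) - u(0, 0)|/t \to +\infty$, contradicting local Lipschitz regularity.

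The main technical difficulty is verifying uniform boundedness in $t$ of the various error terms: the Jacobian correction, the kernel-expansion remainder, and the inner region. All are handled by elementary estimates and hinge on the dichotomy that $|\nabla\psi(r)|^2 \sim |\log r|^{-2}$ is integrable against $dr/r$ on $(0, 1/4)$ while $|\nabla\psi(r)| \sim |\log r|^{-1}$ is not. This failure of the Dini condition for $\nabla\psi$ is precisely what produces the $\log|\log t|$ blow-up, confirming the sharpness of the regularity threshold in Theorem~\ref{thm1}.
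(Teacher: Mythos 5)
Your strategy is the same as the paper's: use the Green's representation of $u$, discard the smooth $h^{x}$ contribution and the part of $\Gamma$ outside $B_{1/4}$, reduce to a one-dimensional radial integral, and show $\partial_{x_n}u(0,t)\to-\infty$. Where you differ is in the treatment of that radial integral. The paper splits at $r_{\vep}$, the unique radius with $\psi(r_{\vep})=\vep$, so that the sign of $\psi(r)-\vep$ is definite on each piece; the inner piece is shown to vanish by a rescaling and monotone convergence, and the outer piece is bounded below using $0\le\frac{1}{|\log r|}-\frac{\vep}{r}\le\frac{1}{\log 4}$. You instead split at $r=t$, kill the inner region by the crude bound $t-\psi(r)\gtrsim t$ (valid since $\psi(t)\le t/|\log t|\ll t$), and on the outer region Taylor-expand the kernel and Jacobian around $r^{-n}$ and $1$, reducing to $\int_t^{1/4}\frac{t-\psi(r)}{r^2}\,dr$ plus an $O(1)$ error. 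This is a legitimate alternative and arguably more transparent: it isolates cleanly that the only unbounded contribution is $\int\frac{dr}{r|\log r|}$, and it makes explicit the dichotomy that $|\psi'|^2\sim|\log r|^{-2}$ is integrable against $dr/r$ while $|\psi'|\sim|\log r|^{-1}$ is not — exactly the failure of the Dini condition.

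Two small points. First, the notation ``$(r^2+(t-\psi)^2)^{-n/2}=r^{-n}(1+O(1))$'' is not meaningful as written; what you need (and what in fact holds) is $(1+(t-\psi)^2/r^2)^{-n/2}=1+O(t^2/r^2+|\log r|^{-2})$ on $t<r<1/4$, and then one checks that $\int_t^{1/4}\frac{|t-\psi(r)|}{r^2}\bigl(t^2/r^2+|\log r|^{-2}\bigr)\,dr$ is bounded uniformly in $t$ — this is the content of your ``uniformly bounded remainder'' claim, and it does go through, but the expansion should be stated correctly. Second, the remark about rotational symmetry forcing $\partial_{x_i}v(0,t)=0$ for $i<n$ is not justified under the hypotheses (only $\Gamma\cap B_{1/4}$ is assumed rotationally symmetric), but it is also unnecessary: blow-up of a single partial derivative already rules out local Lipschitz regularity.
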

\begin{proof}
We will prove that the partial derivative $\partial_{x_n} u$ blows up as we approach $0$ along the vertical ray
$\{\vep e_n = (0, \vep)\}_{\vep > 0}$, 
thereby proving that $|\nabla u| \notin L^{\infty}_{\rm loc}(B_1)$ and hence $u \notin C^{0,1}_{\rm loc}(B_1)$.
More precisely, we claim that
\begin{equation}
\label{eq:lul}
\lim_{\vep \downarrow 0} \partial_{x_n} u(0, \vep)  = - \infty.
\end{equation}

By Theorem~\ref{thm:existence}, we have the representation formula
\[
u(x) = \int_\Gamma G(x,y) d H^{n-1} (y)
= \int_{\Gamma} \Phi(x-y) d H^{n-1} (y) - \int_{\Gamma} h^{x}(y) d H^{n-1} (y) 
\qquad\hbox{for}~x\in B_1.
\]
Assume that $ 0 < \vep \leq 1/16$.
Since $u$ is smooth outside $\Gamma$, we have
\begin{equation}
\label{eq:der}
- \partial_{x_n} u(0, \varepsilon) 
= \frac{1}{\alpha_{n-1}} \int_{\Gamma} \frac{y_n - \vep}{|(y', y_n-\vep)|^{n}} d H^{n-1} (y) 
+ \int_{\Gamma} \partial_{x_n} h^{x}(y)\big|_{x = (0, \vep)} d H^{n-1} (y).
\end{equation}
The function $x \mapsto h^{x}(y) = \Phi\big(|x|(y - \frac{x}{|x|^2})\big)$ is smooth in $B_1$ and hence the second integral in~\eqref{eq:der} is bounded.
Thus, we must show that the first integral in~\eqref{eq:der} diverges to infinity as $\vep \downarrow 0$.

We split $\Gamma$ into two disjoint parts: $\Gamma \setminus B_{1/4}$ and $\Gamma \cap B_{1/4}$.
First, we show that the integral outside $B_{1/4}$ is bounded.
To see this, note that if $|y| = |(y', y_n)| \geq 1/4$, then  $|y'| \geq 1/8$ or $|y_n| \geq 1/8$.
In the first case, we have that $|(y', y_n - \varepsilon)| \geq |y'| \geq 1/8$,
and in the second case $|(y', y_n - \varepsilon)| \geq |y_n| - \varepsilon \geq 1/16$.
It follows that 
\[
\begin{split}
\left|\int_{\Gamma \setminus B_{1/4}} \frac{y_n - \varepsilon}{|(y', y_n-\varepsilon)|^{n}} d H^{n-1} (y) \right|
\leq \int_{\Gamma \setminus B_{1/4}} \frac{|y_n| + \varepsilon}{|(y', y_n-\varepsilon)|^{n}} d H^{n-1} (y)
\leq 2 \cdot 16^n H^{n-1}(\Gamma \setminus B_{1/4}),
\end{split}
\]
which is bounded independently of $\vep$.

Next, we analyze the integral inside $B_{1/4}$.
Let $B_r'$ be the Euclidean ball in $\R^{n-1}$ with radius $r$.
Letting $\psi(r) := \frac{r}{|\log r |}$ for $r > 0$,
the surface integral on $\Gamma \cap B_{1/4}$ can be written in coordinates as
\begin{equation}
\label{eq:comp}
\begin{split}
&\int_{\Gamma \cap B_{1/4}} \frac{y_n - \vep}{|(y', y_n-\vep)|^{n}} d H^{n-1} (y) \\
& \quad = \int_{B_{1/4}'} \frac{\psi(|y'|) - \vep}{|(y', \psi(|y'|) -\vep)|^{n}} \sqrt{1 + \psi'(|y'|)^2} d y'\\
& \quad =  H^{n-2}(\partial B_{1}') \int_{0}^{1/4} \frac{(\psi(r) - \vep) r^{n-2}}{\left( r^2 + (\psi(r) -\vep)^2 \right)^{n/2}} \sqrt{1 + \psi'(r)^2} d r.
\end{split}
\end{equation}
Since $\psi$ is monotone increasing on $(0, 1/4]$ and $\psi(1/4)>1/16\geq \vep$, 
we can define
$r_{\vep}$ to be the unique $0 < r_{\vep} < 1/4$ such that $\psi(r_{\vep}) = \vep$.
Splitting the last integral in~\eqref{eq:comp} 
into the two intervals $(0, r_{\vep})$ and $(r_{\vep}, 1/4)$,
and using that $0< \psi'(r) \leq 2$, we can bound it from below by
\begin{equation}
\label{eq:two}
\begin{split}
&\int_{0}^{1/4} \frac{(\psi(r) - \vep) r^{n-2}}{\left( r^2 + (\psi(r) -\vep)^2 \right)^{n/2}} \sqrt{1 + \psi'(r)^2} d r\\
& \quad \geq 
- \sqrt{5} \int_{0}^{r_{\vep}} \frac{(\vep - \psi(r)) r^{n-2}}{\left( r^2 + (\psi(r) -\vep)^2 \right)^{n/2}} d r
+ \int_{r_{\vep}}^{1/4} \frac{(\psi(r) - \vep) r^{n-2}}{\left( r^2 + (\psi(r) -\vep)^2 \right)^{n/2}} d r.
\end{split}
\end{equation}

For the first integral, using that 
$\vep - \psi(r) = \psi(r_{\vep}) - \psi(r) = r\big( \frac{r_{\vep}}{r |\log r_{\vep}|} - \frac{1}{|\log r|}\big)$,
we have
\begin{equation}
\label{eq:what}
\begin{split}
&0 \leq \int_{0}^{r_{\vep}} \frac{(\vep - \psi(r)) r^{n-2}}{\left( r^2 + (\psi(r) -\vep)^2 \right)^{n/2}} d r\\
& \qquad \quad =
\int_{0}^{r_{\vep}} \frac{
\big(
\frac{r_{\vep}}{r |\log r_{\vep}|} - \frac{1}{|\log r|}
\big) 
}{\left( 1 + \big(\frac{r_{\vep}}{r |\log r_{\vep}|} - \frac{1}{|\log r|}\big)^2 \right)^{n/2} }  \frac{d r}{r} \\
& \qquad \quad =
\int_{0}^{1} \frac{\big(\frac{1}{t |\log r_{\vep}|} - \frac{1}{|\log (t \, r_{\vep})|}\big)}{\left( 1 + \big(\frac{1}{t |\log r_{\vep}|} - \frac{1}{|\log (t \, r_{\vep})|}\big)^2 \right)^{n/2} } \frac{d t}{t},\\
\end{split}
\end{equation}
where in the last line we rescaled by letting 
$t = r/r_{\vep}$. 
For $t \in (0, 1)$ fixed, since the function 
\[
r \mapsto \frac{1}{t |\log r|} - \frac{1}{ |\log (t r)|} 
= \frac{1}{t |\log (t r)|} \left(\frac{|\log t|}{|\log r|} + 1 - t \right) 
\]
is increasing in $r \in (0, 1)$,
and the function
\[
z \mapsto \frac{z}{(1 + z^2)^{n/2}}
\]
is increasing for $z \in (0, \frac{1}{\sqrt{n-1}})$,
it follows that the last integrand in~\eqref{eq:what} is monotone increasing in $\vep$, for $\vep$ small (note that $r_\vep\downarrow 0$ as $\vep \downarrow 0$). 
By monotone convergence, we deduce that
\[
\begin{split}
&0 \leq \lim_{\vep \downarrow 0} \int_{0}^{r_{\vep}} \frac{(\vep - \psi(r)) r^{n-2}}{\left( r^2 + (\psi(r) -\vep)^2 \right)^{n/2}} d r 
= \int_{0}^{1} \lim_{\vep \downarrow 0} \frac{\big(\frac{1}{t |\log r_{\vep}|} - \frac{1}{|\log (t \, r_{\vep})|}\big)}{\left( 1 + \big(\frac{1}{t |\log r_{\vep}|} - \frac{1}{|\log (t \, r_{\vep})|}\big)^2 \right)^{n/2} } \frac{d t}{t} = 0,
\end{split}
\]
and the first integral in~\eqref{eq:two} vanishes as $\vep\to 0$.

Finally, using that $0 \leq \frac{1}{|\log r|} - \frac{\vep}{r} \leq \frac{1}{\log 4}$ for $r_{\vep} < r < 1/4$, the second integral in \eqref{eq:two} can be bounded from below by
\[
\begin{split}
\int_{r_{\vep}}^{1/4} \frac{(\psi(r) - \vep) r^{n-2}}{\left( r^2 + (\psi(r) -\vep)^2 \right)^{n/2}} d r
&\geq
\int_{r_{\vep}}^{1/4} \frac{
\big(
\frac{1}{|\log r|} - \frac{\vep}{r}
\big) 
}{\left( 1 + \big(\frac{1}{|\log r|} - \frac{\vep}{r}\big)^2 \right)^{n/2} }  \frac{d r}{r}\\
&\geq 
\frac{1}{\left(1 + \frac{1}{\log^2 4}\right)^{n/2}}
\int_{r_{\vep}}^{1/4} \left(\frac{1}{|\log r|} - \frac{\vep}{r}\right)\frac{d r}{r},
\end{split}
\]
and by monotone convergence
\[
\lim_{\vep \downarrow 0} \int_{r_{\vep}}^{1/4} \left(\frac{1}{|\log r|} - \frac{\vep}{r}\right)\frac{d r}{r} = \int_{0}^{1/4} \frac{d r}{r |\log r|} = \infty.
\]
Since this is the only divergent integral, \eqref{eq:lul} follows.
\end{proof}

\begin{thm}[$g$ only continuous]\label{thm:counterexample2}
Let $\Omega = B_1 \subset \R^n$, with $n \geq 2$.
Let $\Gamma \subset B_1$ be a smooth closed hypersurface such that $\Gamma \cap B_{1/2} = \{x_n = 0\} \cap B_{1/2}$.
Consider the continuous function $\eta \colon (-1/2, 1/2) \subset \R \to \R$ given by
\[
\eta(t) = \begin{cases}
\frac{1}{|\log t|} & \text{ for } t\in (0, 1/2),\\
0 & \text{ for } t \in (-1/2, 0],
\end{cases}
\]
and let $g \colon \Gamma \to \R$ be any function on $\Gamma$, smooth outside $\Gamma \cap B_{1/2}$, and such that
\[
g(x) = \eta(x_1) \quad \text{ for } x \in \Gamma \cap B_{1/2}.
\]
Then the unique weak solution $u$ to~\eqref{eq:main} is not locally Lipschitz in $B_1$.
\end{thm}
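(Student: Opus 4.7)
The plan is to mirror the strategy of Theorem \ref{thm:counterexample}, but now exploiting the non-Dini regularity of $g$ rather than of $\Gamma$. Since $\Gamma$ is flat near the origin while $\eta$ is singular in the tangential direction $e_1$, the bad derivative this time is the \emph{tangential} derivative $\partial_{x_1} u$, and I would prove
\[
\lim_{\vep \downarrow 0} \partial_{x_1} u(\vep e_n) = -\infty,
\]
which immediately gives $|\nabla u| \notin L^{\infty}_{\rm loc}(B_1)$ and hence $u \notin C^{0,1}_{\rm loc}(B_1)$.

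Using the representation formula in Theorem \ref{thm:existence} and writing $G(x,y) = \Phi(x-y) - h^x(y)$, differentiating under the integral sign yields
\[
\partial_{x_1} u(\vep e_n) \;=\; -\frac{1}{\alpha_{n-1}}\int_{\Gamma} \frac{y_1\, g(y)}{|(y',\, y_n - \vep)|^{n}}\, \dh(y) \;+\; R(\vep),
\]
where $R(\vep)$ collects the contribution of the smooth harmonic correction $h^x$ and remains bounded as $\vep \downarrow 0$. Splitting $\Gamma = (\Gamma \cap B_{1/2}) \cup (\Gamma \setminus B_{1/2})$, the integral over $\Gamma \setminus B_{1/2}$ is bounded uniformly in $\vep$ because the kernel is smooth and $g$ is smooth there. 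On $\Gamma \cap B_{1/2}$ we have $y_n = 0$ and $g(y) = \eta(y_1)$, which is nonnegative and supported in $y_1 > 0$, so the problem reduces to proving
\[
J(\vep) := \int_{B'_{1/2} \cap \{y_1 > 0\}} \frac{y_1}{(|y'|^2 + \vep^2)^{n/2}\, |\log y_1|}\, dy' \;\xrightarrow{\vep \downarrow 0}\; +\infty.
\]

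For $n \geq 3$ I would write $y' = (y_1, y'')$ with $y'' \in \R^{n-2}$ and, for fixed $y_1 \in (0, 1/8)$, rescale $y'' = \sqrt{y_1^2 + \vep^2}\, t$ in the inner integral to get
\[
\int_{|y''| \leq \sqrt{1/4 - y_1^2}} \frac{dy''}{(y_1^2 + |y''|^2 + \vep^2)^{n/2}} \;=\; \frac{H^{n-3}(S^{n-3})}{y_1^2 + \vep^2} \int_{0}^{T_{\vep}(y_1)} \frac{t^{n-3}}{(1+t^2)^{n/2}}\, dt \;\geq\; \frac{c_n}{y_1^2 + \vep^2},
\]
since $T_\vep(y_1) = \sqrt{1/4 - y_1^2}/\sqrt{y_1^2 + \vep^2} \geq 1$ uniformly for $y_1 \leq 1/8$ and $\vep$ small. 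For $n = 2$ the transverse variable is absent and the same lower bound holds trivially. Consequently, in every dimension $n \geq 2$,
\[
J(\vep) \;\geq\; c_n \int_{\vep}^{1/8} \frac{y_1\, dy_1}{(y_1^2 + \vep^2)\,|\log y_1|} \;\geq\; \frac{c_n}{2}\int_{\vep}^{1/8} \frac{dy_1}{y_1\, |\log y_1|} \;=\; \frac{c_n}{2}\bigl(\log|\log \vep| - \log\log 8\bigr) \;\longrightarrow\; +\infty.
\]

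The main obstacle is this reduction to the one-dimensional divergent integral $\int dy_1 / (y_1 |\log y_1|)$: the transverse slab integration must produce exactly the sharp $(y_1^2 + \vep^2)^{-1}$ factor so that, combined with the singular weight $(|\log y_1|)^{-1}$, the iterated integral diverges at the rate $\log|\log \vep|$. Heuristically, what we are computing is the tangential Riesz transform of $\eta$ at the origin, and its divergence reflects the familiar failure of Calder\'on--Zygmund operators to act boundedly on merely continuous (non-Dini) data.
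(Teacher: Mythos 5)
Your proposal is correct and follows essentially the same route as the paper: differentiate the Green's representation in $x_1$, discard the bounded contributions from $h^x$ and from $\Gamma\setminus B_{1/2}$, and reduce to showing that the singular integral $J(\vep)$ diverges as $\vep\downarrow 0$. The only real difference is organizational: the paper passes to the limit $\vep\downarrow 0$ by monotone convergence and then shows the limiting integral equals $\int_0^{1/4} dy_1/(y_1|\log y_1|)=\infty$, whereas you keep $\vep>0$, rescale the transverse variable by $\sqrt{y_1^2+\vep^2}$, restrict to $y_1\geq\vep$, and obtain the explicit quantitative lower bound $J(\vep)\geq \tfrac{c_n}{2}\log|\log\vep|$. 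Your version is marginally more direct (no dominated/monotone-convergence step needed) and also makes the $n=2$ case explicit, which the paper's polar-coordinate formula $H^{n-3}(\partial B''_{1/4})\cdot\int_0^{1/4}s^{n-3}(\cdots)\,ds$ degenerates for; both points are minor stylistic improvements and not a different proof.
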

\begin{proof}
We claim that the partial derivative $\partial_{x_1} u$ blows up as we approach $0$ along the vertical ray
$\{\varepsilon e_n = (0, \varepsilon)\}_{\varepsilon > 0}$.
As a consequence, we deduce that $|\nabla u| \notin L^{\infty}_{\rm loc}(B_1)$ and hence $u \notin C^{0,1}_{\rm loc}(B_1)$.

Fix $\vep$ sufficiently small such that $(0,\vep)\notin\Gamma$. 
Splitting the integral in the representation formula from Theorem~\ref{thm:existence},
the partial derivative $-\partial_{x_1} u$ at $(0, \varepsilon)$ can be written as
\[
-\partial_{x_1} u(0, \varepsilon) = \frac{1}{\alpha_{n-1}} \int_{B_{1/2}'} \frac{y_1 \, \eta(y_1)}{\left(|y'|^2 + \varepsilon^2 \right)^{n/2}} d y' + w(\varepsilon),
\]
where $w$ is a bounded smooth function.

We present the proof only for $n \geq 3$, while the calculation for $n = 2$ follows the same lines.
Let $B_r''$ be the Euclidean ball in $\R^{n-2}$ with radius $r$. 
Writing $y' = (y_1, y'') \in \R \times \R^{n-2}$, we have
\[
\begin{split}
\int_{B_{1/2}'} \frac{y_1 \, \eta(y_1)}{\left(|y'|^2 + \varepsilon^2 \right)^{n/2}} d y' 
&= \int_{0}^{1/2} \int_{B''_{\sqrt{1/4- y_1^2}}}   \frac{y_1 |\log y_1|^{-1}}{\left( y_1^2 + |y''|^2 + \varepsilon^2 \right)^{n/2}}  \, dy''  d y_1\\
& \geq \int_{0}^{1/4} \int_{B''_{1/4}} \, \frac{y_1 |\log y_1|^{-1}}{\left( y_1^2 + |y''|^2 + \varepsilon^2 \right)^{n/2}}  \, dy''  d y_1\\
& = H^{n-3}(\partial B_{1/4}'')\int_{0}^{1/4}\frac{y_1}{|\log y_1|} \left(\int_{0}^{1/4} \frac{ s^{n-3}}{\left( y_1^2 + s^2 + \varepsilon^2 \right)^{n/2}}\ ds  \right)  d y_1 ,
\end{split}
\]
where in the last line we used polar coordinates with $s = |y''|$.
Taking the limit as $\vep \downarrow 0$, by monotone convergence, it follows that
\[
\begin{split}
\lim_{ \varepsilon \downarrow 0}\,
&
\int_{0}^{1/4} \frac{y_1}{|\log y_1|}\left( \int_{0}^{1/4} \frac{ s^{n-3}}{\left( y_1^2 + s^2 + \varepsilon^2 \right)^{n/2}}\, ds  \right)d y_1\\
&\quad = 
\int_{0}^{1/4} \frac{y_1}{|\log y_1|}\left( \int_{0}^{1/4} \frac{s^{n-3}}{\left( y_1^2 + s^2 \right)^{n/2}}\, ds \right)  d y_1\\
& \quad = \int_{0}^{1/4} \frac{1}{y_1 |\log y_1|} \left( \int_{0}^{1/(4 y_1)} \frac{\, t^{n-3}}{\left( 1 + t^2 \right)^{n/2}} d t\right) d y_1,
\end{split}
\]
where in the last line we rescaled by letting $t = s/y_1$.
For $0< y_1 < 1/4$, we have
\[
\int_{0}^{1/(4 y_1)} \,\frac{\, t^{n-3}}{\left( 1 + t^2 \right)^{n/2}} d t \geq \int_{0}^{1} \,\frac{\, t^{n-3}}{\left( 1 + t^2 \right)^{n/2}} d t 
= \frac{1}{(n-2)2^{(n-2)/2}} > 0,
\]
and hence
\[
\begin{split}
\lim_{ \varepsilon \downarrow 0}\,
\int_{B_{1/2}'} \frac{y_1 \, \eta(y_1)}{\left(|y'|^2 + \varepsilon^2 \right)^{n/2}} d y' 
\geq 
\frac{H^{n-3}(\partial B_{1/4}'')}{(n-2)2^{(n-2)/2}}
\int_{0}^{1/4} \frac{d y_1}{y_1 |\log y_1|} = \infty.
\end{split}
\]
From the above computations, we deduce that $\displaystyle\lim_{\varepsilon \downarrow 0} \partial_{x_1} u (0, \varepsilon)= - \infty$ and the claim follows.
\end{proof}

\begin{rem}\label{rem:counterexample}
The fact that the Lipschitz regularity of the solution should fail for bounded or continuous $g$ can also be inferred from the classical elliptic theory.
To illustrate this, 
we restrict to a localized version of the problem where the interface is not closed.
In the case that $\Gamma$ is flat, e.g., $\Gamma = \{x_n = 0\}$, the transmission problem \eqref{eq:transmission}
decouples into the two Neumann problems
    \[
    \begin{cases}
    \Delta u^\pm = 0 & \text{ in } B_1^\pm,\\
    \partial_{x_n} u^\pm = \pm \frac{1}{2} g & \text{ on } \Gamma\cap B_1,\\
    u^\pm = 0 & \text{ on } \partial B_1^\pm \setminus \Gamma,
    \end{cases}
    \]
    where $B_1^\pm=B_1\cap\{\pm x_n>0\}$. 
    The $L^p$ and Schauder estimates for such problems yield, respectively,
\[
    g \in L^p \quad \Longrightarrow \quad  \nabla u \in L^p \qquad (1 < p < \infty),
    \]
    \[
    g \in C^{0,\alpha} \quad \Longrightarrow \quad  \nabla u \in C^{0,\alpha} \qquad (0 <\alpha < 1).
    \]
It is well-known that these implications do not hold in the extreme cases $p = \infty$ and $\alpha = 0$.
\end{rem}

\begin{rem}\label{rem:optimal}
Our counterexamples in Theorems~\ref{thm:counterexample} and~\ref{thm:counterexample2} are critical with respect to the Dini condition.
In both cases, the moduli of continuity at $0$ satisfy $\omega_{\psi}(r) \sim \omega_g(r) \sim \frac{1}{|\log r|}$,
which is not a Dini function, as the change of variables $z = |\log r|$ shows that
\[
\int_{0}^{\delta} \frac{d r}{r |\log r|} = \int_{|\log \delta|}^{\infty} \frac{d z}{z} = \infty,
\]
for all $\delta > 0$.
Combined with our main results, these examples imply the minimality of the Dini continuity assumption to have Lipschitz regularity of solutions to Poisson's equation.
\end{rem}

\section{A quantitative error estimate}
\label{sec:key}

As explained in the introduction, to prove the Lipschitz regularity of solutions to~\eqref{eq:main}, it suffices to establish local estimates close to the interface $\Gamma$.
Restricting to a small neighborhood of each point on $\Gamma$,
by translation, rotation, and rescaling, the surface $\Gamma$ is locally the graph of a function $\psi \colon \overline{B_1'} \subset \R^{n-1} \to \R$. 
The following assumptions correspond to such a normalized situation.\medskip

Fix $0<r\leq 1$.
Let $\psi\in C^{1}(\overline{B_r'}) \cap C^{1,\dini}(0)$ 
and $g\in C^0(\Gamma\cap \overline{B_r})\cap C^{0,\dini}(0)$, where
\[
\Gamma=\{x\in B_r : x_n = \psi(x')\}.
\]
Assume that $0 \in \Gamma$ and that the tangent plane of $\Gamma$ at $0$ is $T_0\Gamma = \{x_n = 0\}$, i.e.,
\[
\psi(0) = 0 \quad \text{ and } \quad \nabla \psi(0) = 0.
\]
By assumption, there are Dini functions $\gradiniz$ and $\gdiniz$ such that
\[
\hspace{-0.5cm}
|\nabla \psi(x')| \leq \gradiniz(|x'|) \qquad \text{ for all } x' \in B_{r}',
\]
and
\[
|g(x) - g(0)|  \leq \gdiniz(|x|) \qquad \hbox{ for all }~  x\in \Gamma \cap B_r.
\]
We also assume that
\[
\gradiniz(r)\leq 1
\]
by taking $r$ smaller if necessary.

Geometrically, the $C^{1,\dini}(0)$ regularity of $\psi$ means that the curved interface $\Gamma$ is (quantifiably) close to its tangent plane $T_0 \Gamma$.
Similarly, the $C^{0,\dini}(0)$ assumption means that the density $g$ is close to the constant value $g(0)$ near the origin.
Thus, we expect the solutions of the Poisson equation with curved $\Gamma$ and variable $g$ to be close
to the solutions of the analogous problem with a flat interface and a constant density.

Indeed, under the above assumptions, the following lemma shows that if $u$ and $v$ are distributional solutions of, respectively, $\Delta u= g \mes{\Gamma}$ and $\Delta v = g(0) \mes{\{x_n=0\}}$ in $B_r$, with $u=v$ on $\partial B_r$, then the error $w=u-v$ must be locally bounded by $r \omega(r)$, where
\begin{equation}
\label{def:modulus}
\omega(r) := \max\big\{|g(0)|\gradiniz(r), \, \gdiniz(r)\big\}.
\end{equation}

\begin{lem} \label{keylemma}
Let $w$ be the unique weak solution to the Dirichlet problem
\[
\begin{cases}
\Delta w = g \mes{\Gamma} - g(0) \mes{\{x_n = 0\}} & \text{in}~B_{r},\\
w = 0 & \text{on}~\partial B_{r}.
\end{cases}
\]
Assuming that $n \geq 3$,
for any $0<\rho \leq 1/2$, we have that 
\[
\sup_{B_{\rho r}} |w| \leq C r\omega(r),
\]
where $C>0$ depends only on $n$.
\end{lem}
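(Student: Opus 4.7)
The plan is to work directly with the Green's function representation from Theorem~\ref{thm:existence} applied on the ball $B_r$, and to estimate the error term-by-term, exploiting the pointwise smallness of $g-g(0)$, $\psi$, and $\nabla\psi$. Let $G_r$ denote the Green's function for $\Delta$ on $B_r$ and set $P_r=\{x_n=0\}\cap B_r$, so that
\[
w(x)=\int_\Gamma G_r(x,y)[g(y)-g(0)]\, dH^{n-1}(y) + g(0)\Big\{\int_\Gamma G_r(x,y)\, dH^{n-1}(y) - \int_{P_r} G_r(x,y)\, dH^{n-1}(y)\Big\}.
\]
The ``density variation'' piece is the easy one: from $|g(y)-g(0)|\leq [g]_{C^{0,\dini}(0)}\omega_g(r)$ on $\Gamma\cap B_r$ combined with the standard surface potential bound $\int_\Gamma|G_r(x,y)|\, dH^{n-1}(y)\leq Cr$ (which follows from $|G_r|\leq C|x-y|^{2-n}+Cr^{2-n}$ and $H^{n-1}(\Gamma)\leq Cr^{n-1}$), it is bounded by $C[g]_{C^{0,\dini}(0)}r\omega_g(r)$.

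For the ``geometric'' piece I parametrize both surfaces as graphs over $B_r'$. With $D=\{y'\in B_r':(y',\psi(y'))\in B_r\}$, the difference of surface integrals decomposes as
\[
\int_D G_r(x,(y',\psi(y')))\big[\sqrt{1+|\nabla\psi|^2}-1\big]\, dy' + T(x) - \int_{B_r'\setminus D} G_r(x,(y',0))\, dy',
\]
where $T(x):=\int_D[G_r(x,(y',\psi(y')))-G_r(x,(y',0))]\, dy'$. The Jacobian piece is controlled using $|\sqrt{1+|\nabla\psi|^2}-1|\leq\tfrac{1}{2}|\nabla\psi|^2\leq C\omega_\psi(r)^2$, giving $\leq Cr\omega_\psi(r)^2$. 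The boundary correction uses that $B_r'\setminus D\subset\{|y'|\geq r-Cr\omega_\psi(r)^2\}$ has measure $\leq Cr^{n-1}\omega_\psi(r)^2$, while on this set $|x-(y',0)|\geq r/4$ and so $|G_r|\leq Cr^{2-n}$; this piece is then $\leq Cr\omega_\psi(r)^2$.

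The main obstacle will be the vertical-shift term $T(x)$. By Fubini,
\[
T(x) = \int_{\tilde E}\partial_{y_n} G_r(x,y)\, dy,\qquad \tilde E := \{(y',t):y'\in D,\ t\text{ lies between } 0\text{ and }\psi(y')\},
\]
and I split $\partial_{y_n} G_r(x,y) = \partial_{y_n}\Phi(x-y) - \partial_{y_n} h^x(y)$, where $h^x$ is the harmonic extension to $B_r$ of $\Phi(x-\cdot)|_{\partial B_r}$. The explicit Green's function formula on the ball yields the pointwise estimate $|\nabla_y h^x(y)|\leq Cr^{1-n}$ uniformly for $x\in B_{\rho r}$ and $y\in B_r$; together with $|\tilde E|\leq Cr^n\omega_\psi(r)$, this makes the harmonic contribution to $T$ at most $Cr\omega_\psi(r)$.

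For the singular $\Phi$-contribution, I use the explicit form $\partial_{y_n}\Phi(x-y)=(y_n-x_n)/(\alpha_{n-1}|x-y|^n)$ and switch the order of integration to integrate first in $y'$ for each fixed $y_n$. The crucial ingredient is the rescaling identity (obtained by setting $y'-x'=a z'$)
\[
\int_{\R^{n-1}} \frac{dy'}{(|y'-x'|^2+a^2)^{n/2}} = \frac{C_n}{|a|}\qquad (a\neq 0),
\]
which yields the uniform bound
\[
\left|(y_n-x_n)\int_{\{y'\in D:\psi(y')\geq y_n\}}\frac{dy'}{|x-(y',y_n)|^n}\right|\leq |y_n-x_n|\cdot\frac{C_n}{|y_n-x_n|}=C_n.
\]
Since $y_n$ ranges over a slab of width $\leq 2[\psi]_{C^{1,\dini}(0)}r\omega_\psi(r)$, integrating in $y_n$ produces $|T(x)|\leq Cr\omega_\psi(r)$. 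Combining the four estimates and recalling $\omega(r)=\max\{\omega_\psi(r),\omega_g(r)\}$ delivers the claimed bound $\sup_{B_{\rho r}}|w|\leq C\|g\|_{C^{0,\dini}(0)}r\omega(r)$.
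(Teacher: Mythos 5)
Your proposal is correct and takes essentially the same route as the paper: expand $w$ via the Green's function of $B_r$, isolate a density-variation term and a Jacobian term, and handle the vertical-shift term by splitting $G=\Phi-h^x$, using the fundamental theorem of calculus/Fubini together with the uniform rescaling bound $\int_{\mathbb{R}^{n-1}}(|y'|^2+a^2)^{-n/2}\,dy'=C_n/|a|$ for the singular part, and the pointwise gradient bound $\|\nabla_y h^x\|_{L^\infty(B_r)}\le Cr^{1-n}$ for the regular part. The only difference is that you track the boundary-correction set $B_r'\setminus D$ explicitly, which the paper glosses over; this is a minor refinement, not a change of method.
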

\begin{proof}
By Theorem~\ref{thm:existence}, 
the solution $w$ can be written as
\begin{equation} \label{eq:repw}
\begin{split}
w(x) &= \int_{\Gamma \cap B_{r}} G(x, y) g(y) d H^{n-1} (y) - \int_{\{y_n = 0\} \cap B_{r}} \!\!\!\!\!\!\!\!G(x, y) g(0) d H^{n-1} (y),
\\
\end{split}
\end{equation}
where 
$
G(x, y) = \Phi(x - y) - h^x(y)
$
is the Green's function of the Laplace operator in the ball $B_r \subset \R^n$.
Here, we have
\[
h^{x}(y) 
= \Phi \left(r \frac{x}{|x|} - |x|\frac{y}{r}\right).
\]

Using~\eqref{eq:repw}, for each $x \in B_{\rho r}$, we can bound $w(x)$ by
\begin{equation}
\label{eq:suf}
\begin{split}
|w(x)| &\leq \left|\int_{\Gamma \cap B_{r}} G(x, y) (g(y) - g(0) ) d H^{n-1} (y)\right| \\
& \quad \quad + |g(0)|\left| \int_{\Gamma \cap B_{r}} G(x, y) d H^{n-1} (y) - \int_{\{y_n = 0\} \cap B_{r}} \!\!\!\!\!\!\!\!G(x, (y',0)) \sqrt{1 +|\nabla \psi(y')|^2} dy' \right|\\
& \quad \quad + |g(0)|\left| \int_{B'_{r}} G(x, (y',0))\left(\sqrt{1 +|\nabla \psi(y')|^2} - 1\right) d y' \right|
\end{split}
\end{equation}
and it suffices to estimate each of the terms on the right-hand side.

\vspace{3mm}
\noindent
\textbf{Step 1:} \textit{First and third terms in~\eqref{eq:suf}}.
In the graph coordinates $y' \mapsto (y', \psi(y'))$, the  measure of $\Gamma$ takes the form $\mes{\Gamma} = \sqrt{1 + |\nabla \psi(y')|^2} d y'$ for all $y' \in B_r'$.
Since $\nabla \psi$ is Dini continuous, we have
\begin{equation}
\label{eq:psidini}
    |\nabla \psi(y')| = |\nabla \psi(y') - \nabla \psi(0)| \leq \gradiniz(|y'|) \leq \gradiniz(r)
\end{equation}
for all $y' \in B_r'$.
Using that $\sqrt{1 + |\nabla \psi(y')|^2} \leq \sqrt{2}$ since $\gradiniz(r)\leq 1$, it follows that
\[
\begin{split}
\int_{\Gamma \cap B_{r}} |\Phi(x-y)| d H^{n-1} (y) &=  
c_n\int_{B_r'} \left(|x'- y'|^2 + |x_n - \psi(y')|^2 \right)^{\frac{2-n}{2}} \sqrt{1 + |\nabla \psi(y')|^2}d y' \\
&\leq C \int_{B_r'} |x' - y'|^{2-n} d y' \leq C \int_{B_{2r}'} |y'|^{2-n} d y' \leq C r.
\end{split}
\]
For $x \in B_{\rho r}$ and $y \in B_r$, we have 
\begin{equation}\label{low}
\left|r \frac{x}{|x|} - |x|\frac{y}{r}\right| \geq r - |x| \frac{|y|}{r} \geq r (1 - \rho),
\end{equation}
hence
\[
\|h^{x}\|_{L^{\infty}(B_{ r})} = \textstyle\|\Phi(r\frac{x}{|x|} - |x| \frac{\cdot}{r})\|_{L^{\infty}(B_{ r})} \leq C r^{2-n}(1-\rho)^{2-n}
\]
and therefore
\[
\begin{split}
\int_{\Gamma \cap B_{r}} |h^{x}(y)| d H^{n-1} (y) &
\leq \|h^{x}\|_{L^{\infty}(B_r)} H^{n-1}(\Gamma \cap B_r) \leq C(1-\rho)^{2-n} r \leq Cr.
\end{split}
\]
By Dini continuity of $g$, the first term in~\eqref{eq:suf} can be bounded by
\[
\begin{split}
\left|\int_{\Gamma \cap B_{r}} G(x, y) (g(y) - g(0) ) d H^{n-1} (y)\right|  &\leq 
\int_{\Gamma \cap B_{r}} \left|G(x, y)\right| d H^{n-1} (y) \, \gdiniz(r) \leq C r\gdiniz(r).
\end{split}
\]

To estimate the third term in~\eqref{eq:suf}, by 
\eqref{eq:psidini}, we have
\[
0 \leq \sqrt{1 +|\nabla \psi(y')|^2} - 1 \leq \frac{1}{2} |\nabla \psi(y')|^2 \leq 
\frac{1}{2} \gradiniz(r),
\]
and arguing as above we obtain
\[
\left| \int_{B'_{r}} G(x, (y',0))\left(\sqrt{1 +|\nabla \psi(y')|^2} - 1\right) d y' \right|
\leq C  r\gradiniz(r).
\]

\vspace{3mm}
\noindent \textbf{Step 2:} \textit{Second term in~\eqref{eq:suf}}.
We split the integral into a singular and regular part as follows
\begin{equation}
\label{eq:sef}
\begin{split}
&\left| \int_{\Gamma \cap B_{r}} G(x, y) d H^{n-1} (y) - \int_{\{y_n = 0\} \cap B_{r}} \!\!\!\!\!\!\!\!G(x, y) \sqrt{1 +|\nabla \psi(y')|^2} dy' \right|\\
& \quad \leq \left| \int_{\Gamma \cap B_{r}} \Phi(x- y) d H^{n-1} (y) - \int_{\{y_n = 0\} \cap B_{r}} \!\!\!\!\!\!\!\!\Phi(x- y) \sqrt{1 +|\nabla \psi(y')|^2} dy' \right|\\
& \quad \quad \quad+ \left| \int_{\Gamma \cap B_{r}} h^{x}(y) d H^{n-1} (y) - \int_{\{y_n = 0\} \cap B_{r}} \!\!\!\!\!\!\!\!h^{x}(y) \sqrt{1 +|\nabla \psi(y')|^2} dy' \right|.
\end{split}
\end{equation}

We bound the singular part in~\eqref{eq:sef} first.
Writing $\mes{\Gamma} = \sqrt{1 + |\nabla \psi(y')|^2}d y'$ and using that $\sqrt{1 + |\nabla \psi(y')|^2} \leq \sqrt{2}$, we have
\[
\begin{split}
&\left| \int_{\Gamma \cap B_{r}} \Phi(x- y) d H^{n-1} (y) - \int_{\{y_n = 0\} \cap B_{r}} \!\!\!\!\!\!\!\!\Phi(x- y) \sqrt{1 +|\nabla \psi(y')|^2} dy' \right|\\
& \quad \quad = \left| \int_{B'_{r}} \Big(\Phi(x'- y', x_n - \psi(y')) -\Phi(x- y)\Big) \sqrt{1 +|\nabla \psi(y')|^2} dy' \right|\\
& \quad \quad \quad \quad  \leq C \int_{B_r'} \Big| \Phi(x' - y', x_n - \psi(y')) - \Phi(x' - y', x_n)\Big| d y'.
\end{split}
\]
By the fundamental theorem of Calculus,
\[
\begin{split}
&\int_{B_r'} \Big| \Phi(x' - y', x_n - \psi(y')) - \Phi(x' - y', x_n)\Big| d y'\\
& \quad \quad = \int_{B_{r}'}  \left|\int_{0}^{\psi(y')} \partial_{x_n} \Phi(x' - y', x_n - t) d t\right| d y'\\
& \quad \quad \quad \quad \leq \int_{- r \gradiniz(r)}^{ r \gradiniz(r)}  \int_{B_{r}'}  |\partial_{x_n} \Phi(x'- y', x_n - t)| d y' d t,
\end{split}
\]
where in the last line we used that $|\psi(y')| \leq r \gradiniz(r)$ for $y' \in B'_{r}$ together with Fubini's theorem.
Since $B_r' \subset B'_{(1 + 2 \rho)r }(x') \subset B'_{2 r }(x')$ for $x \in B_{\rho r}$, the integral in $B_r'$ can be bounded by
\[
\begin{split}
\int_{B_{r}'} |\partial_{x_n} \Phi(x'- y', x_n - t)| d y' &\leq C \int_{B_{2r}'(x')} \frac{|x_n-t|}{\left( |x'-y'|^2 + |x_n - t|^2 \right)^{n/2}} d y'\\
&= C \int_{B_{\frac{2r}{|x_n-t|}}'}  \frac{1}{\left( |z'|^2 + 1 \right)^{n/2}} d z' \leq C \int_{0}^{\infty} \frac{\tau^{n-2}}{(1 + \tau^2)^{n/2}} d \tau \leq C.
\end{split}
\]
Therefore, it follows that
\begin{equation}
\label{eq:phiterm}
\begin{split}
&\left| \int_{\Gamma \cap B_{r}} \Phi(x- y) d H^{n-1} (y) - \int_{\{y_n = 0\} \cap B_{r}} \!\!\!\!\!\!\!\!\Phi(x- y) \sqrt{1 +|\nabla \psi(y')|^2} dy' \right| \leq 
C r \gradiniz(r).
\end{split}
\end{equation}

Next, we estimate the regular part in~\eqref{eq:sef}.
Using the expression for $h^{x}$ and by~\eqref{low} above, it follows that
\[
\|\partial_{y_n} h^{x}\|_{L^{\infty}(B_r)} 
\textstyle \leq \frac{|x|}{r} \big\|\nabla \Phi\big(r \frac{x}{|x|} - |x|\frac{\cdot}{r}\big)\big\|_{L^{\infty}(B_r)}
\leq \rho (1-\rho)^{1-n} r^{1-n} \leq C r^{1-n}
\]
for $x \in B_{\rho r}$.
By the mean value theorem and \eqref{eq:psidini}, we get
\[
\begin{split}
& 
\int_{B_r'} \left| h^{x}(y', \psi(y')) - h^{x}(y', 0) \right| d y'\\
& \quad \quad \leq C \|\partial_{y_n} h^x\|_{L^{\infty}(B_r)} \int_{B_r'} |\psi(y')| d y' 
\leq C  r \gradiniz(r),
\end{split}
\]
and hence, 
\begin{equation}
\label{eq:hterm}
\begin{split}
&\left| \int_{\Gamma \cap B_{r}} h^{x}(y) d H^{n-1} (y) - \int_{\{y_n = 0\} \cap B_{r}} \!\!\!\!\!\!\!\! h^{x}(y) \sqrt{1 +|\nabla \psi(y')|^2} dy' \right| \leq C   r \gradiniz(r).
\end{split}
\end{equation}

Combining~\eqref{eq:phiterm} and~\eqref{eq:hterm} in~\eqref{eq:sef}, we finally obtain
\[
\begin{split}
&\left| \int_{\Gamma \cap B_{r}} G(x, y) d H^{n-1} (y) - \int_{\{y_n = 0\} \cap B_{r}} \!\!\!\!\!\!\!\!G(x, y) \sqrt{1 +|\nabla \psi(y')|^2} dy' \right| \leq C r \gradiniz(r).
\end{split}
\]

\vspace{3mm}
 Steps $1$ and $2$ yield the estimate
\[
|w(x)| \leq C r \left( \gdiniz(r) + |g(0)| \gradiniz(r) \right)
\quad \text{ for all } x \in B_{\rho r},
\]
where $C$ depends only on $n$, thus proving the claim.
\end{proof}

\begin{rem}
For $n = 2$, the above proof leads to the weaker estimate
\[
\sup_{B_{\rho r}}|w| \leq 
C r |\log r| \omega(r).
\]
The additional logarithmic error arises only in Step 1.
Nonetheless, Theorems~\ref{thm1} and~\ref{thm2} can still be obtained from this estimate under the stronger assumption that $\omega(r)$ is a  Log-Dini function, that is, satisfying
\begin{equation}\label{eq:logdini}
\int_0^{r_0} \frac{|\log r| \, \omega(r)}{r}\, dr <\infty
\end{equation}
for some $r_0>0$.
To deduce our results in this case, assuming the moduli $\gradiniz$ and $\gdiniz$ to be Log-Dini functions, one must simply multiply the maximum $\omega(r) = \max\{|g(0)| \gradiniz(r), \gdiniz(r)\}$ by $|\log r|$ in the proofs from Section~\ref{sec:proofs}.

We point out that, since the H\"{o}lder modulus $\omega(r)=r^\alpha$ satisfies~\eqref{eq:logdini}, when $\Gamma$ is a $C^{1,\alpha}$ interface and $g \in C^{0,\alpha}$, our main result for $n \geq 2$ recovers results from~\cite{CSCS,Muller1,Muller2}.

The Log-Dini assumption for $n = 2$ is not the sharp condition to have Lipschitz continuity. 
This can be inferred from our counterexample in Theorem~\ref{thm:counterexample2} by modifying the function $\eta$ appearing there.
For instance, choosing $\eta(t) = \frac{1}{|\log t|^2}$ instead of $\eta(t) = \frac{1}{|\log t|}$ for $t \in (0,\frac{1}{2})$ yields a Dini function $g$, which is not Log-Dini, but gives rise to a Lipschitz solution.
This example also suggests that the Dini condition should be the optimal assumption; see~\cite{D}.
\end{rem}

\section{Proofs of main results}
\label{sec:proofs}

First, we prove the following discrete version of Theorem~\ref{thm2}.

\begin{thm}
\label{thm3}
Under the assumptions of Theorem~\ref{thm2}, 
there exist $0<\rho\leq 1/2$, depending only on $n$,
and $0 < R \leq 1$, depending only on $\gradiniz$,
such that for all integers $k \geq 0$, 
 there are linear polynomials $l_k^\pm(x) = a_k^\pm \cdot x + b_k$, with $a_k^\pm\in \Rn$ and $b_k\in \R$, satisfying 
 $$a_k^+-a_k^-=g(0)e_n,$$ 
such that
\begin{equation} \label{eq:mainest}
\sup_{\opm \cap B_{\rho^k R}} |u^\pm - l_k^\pm| \leq C_0 d_k \rho^k R
\end{equation}
and
\begin{equation}
\rho^{k-1}R |a_{k}^\pm - a_{k-1}^\pm| + |b_{k}-b_{k-1} | \leq C_1C_0 d_{k-1}\rho^{k-1} R,\label{eq:cauchy}
\end{equation}
where $a_{-1}^\pm=0$, $b_{-1}=0$,
$d_{-1}=d_0= \|u\|_{L^{\infty}(B_R)} R^{-1} + |g(0)|$,
and
\begin{equation} \label{eq:dk}
d_{k} =  
\max\{
\gdiniz(\rho^k R), \, |g(0)| \gradiniz(\rho^{k} R), 
\, \rho^{1/2} d_{k-1}\}
\quad \hbox{for}~k\geq 1,
\end{equation}
for some constants $C_0 > 0$ and $C_1 > 0$ depending only on $n$.
\end{thm}

\begin{proof}[Proof of Theorem~\ref{thm3}]
Recall that $\Gamma = \{x\in B_1 : x_n = \psi(x')\}$,
with $0\in \Gamma$, $\psi \in C^{1,\dini}(0)$, and $g\in C^{0,\dini}(0)$ (see Definitions~\ref{def:g} and~\ref{def:dinispaces}).
By rotation, we may assume that $\nabla \psi(0)=0$.
Let $0 < R \leq 1$ be such that 
\[
\gradiniz(R) \leq 1 \quad \text{ and } \int_{0}^{R} \frac{\gradiniz(t)}{t} d t \leq 1.
\]

Let $u \in C(B_1)$ be a bounded distributional solution to 
$$\Delta u = g\, \dhr \qquad \hbox{in}~B_1.$$
We write $\omega(r) = \max\{|g(0)| \gradiniz(r), \gdiniz(r)\}$ as in~\eqref{def:modulus}.

We will prove the theorem by induction. 
For $k=0$, let $a_0^\pm = \pm \tfrac{g(0)}{2}e_n$ and $b_0=0$. 
Then
\[
\sup_{\opm \cap B_R} |u^{\pm} - l_0^{\pm}| \leq \|u\|_{L^{\infty}(B_R)} + \frac{1}{2}|g(0)| R \leq C_0 d_0 R,
\]
and
\[
|a_0^\pm | \leq \frac{1}{2} |g(0)| \leq C_1C_0 d_{-1}\rho^{-1},
\]
for any $C_0,C_1 \geq 1$ and $0<\rho\leq 1/2$. 
Assume that~\eqref{eq:mainest}, \eqref{eq:cauchy}, and~\eqref{eq:dk} hold for some $k$. We need to prove that they also hold for $k+1$. 
Let $r= \rho^k R$ and 
denote $\Omega_r^\pm = \Omega^\pm \cap B_r$, $\Gamma_r = \Gamma \cap B_r$, and $B_r^{\pm} = B_r \cap \{\pm x_n > 0\}$.
We define $p_k$ as the piecewise linear polynomial given by 
$$
p_k = l_k^+\chi_{\overline{B_r^+}} + l_k^-\chi_{B_r^-},
$$
where $l_k^\pm$ are the linear approximations obtained by the induction hypothesis and satisfying the conditions in Theorem~\ref{thm3}. 
Notice that 
$(\nabla l_k^+ - \nabla l_k^-)\cdot e_n = (a_k^+-a_k^-)\cdot e_n=g(0)$  and $p_k\in C(\overline{B_r})$  satisfies
\begin{equation*} 
    \Delta p_k = g(0) \mes{\{x_n=0\}} \quad \hbox{in}~B_r,
\end{equation*}
in the distributional sense. Indeed, for any $\varphi\in C_c^\infty(B_r)$, by Green's identity, we have
\begin{align*}
\Delta p_k(\varphi) 
&= \int_{B_r^+}l_k^+  \Delta \varphi \, dx +
 \int_{B_r^-} l_k^- \Delta \varphi \, dx\\
 &=  - \int_{\{x_n=0\}\cap B_r} l_k^+ \partial_{x_n} \varphi \, d H^{n-1}+  \int_{\{x_n=0\}\cap B_r} \partial_{x_n} l_k^+ \varphi \, d H^{n-1} \\
&\qquad + \int_{\{x_n=0\}\cap B_r} l_k^- \partial_{x_n} \varphi \, d H^{n-1}-  \int_{\{x_n=0\}\cap B_r} \partial_{x_n} l_k^- \varphi \, d H^{n-1} \\
& = \int_{\{x_n=0\}\cap B_r} (l_k^--l_k^+) \partial_{x_n}\varphi \, d H^{n-1}+ \int_{\{x_n=0\}\cap B_r}( \partial_{x_n}l_k^+ - \partial_{x_n}l_k^-) \varphi \, d H^{n-1}\\
 &=  \int_{\{x_n=0\}\cap B_r} g(0) \varphi \, d H^{n-1},
\end{align*}
where in the last line we have used that for $x \in \R^n$ such that $x_n = 0$, we have
\[
l_k^{+}(x) - l_{k}^{-}(x) = (a_k^{+} - a_k^{-})\cdot x = g(0) e_n \cdot x = g(0) x_n = 0.
\]
Furthermore, 
since $\psi(|x'|) \leq r \gradiniz(r)$ for $|x'| \leq r$,
we have 
\begin{equation}\label{eq:polyclose}
\sup_{\Omega^\pm_{r}}|p_k - l_k^\pm| = \sup_{\Omega^\pm_{ r}\cap B_{ r}^\mp}|(a_k^- - a_k^+) \cdot x| 
= \sup_{\Omega^\pm_{ r}\cap B_{r}^\mp}|g(0)| |x_n| \leq r |g(0)| \gradiniz( r) \leq r \omega(r),
\end{equation}
where we used the compatibility condition $a_k^+ - a_k^- = g(0) e_n$ and the fact that
\[
\Omega_{ r}^\pm \cap B_{r}^\mp \subset \big\{|x_n| \leq |\psi(x')| \big\} \cap B_r.
\]
Therefore, by~\eqref{eq:mainest}, \eqref{eq:polyclose}, and \eqref{eq:dk}, 
since $C_0 \geq 1$,
it follows that
\begin{equation}\label{eq:polyclose2}
\begin{split} 
   \sup_{\Omega_r^\pm \cap B_r^\mp} |u^\pm - p_k|
 &\leq  \sup_{\Omega_r^\pm \cap B_r^\mp}|u^\pm-l_k^\pm| +  \sup_{\Omega_r^\pm \cap B_r^\mp}|p_k - l_k^\pm |\\
& \leq C_0 r d_k + r\omega(r)
 \leq  2C_0 r d_k.
 \end{split}
\end{equation} 
Hence, by~\eqref{eq:mainest} and~\eqref{eq:polyclose2}, we get
\begin{equation}\label{eq:polyclose3}
    \begin{split} 
\sup_{{B_r}} |u - p_k| & \leq \max\Big\{ \sup_{\Omega_r^+ \cap B_r^+} |u^+-l_k^+|, \sup_{\Omega_r^+ \cap B_r^-} |u^+-p_k|, \sup_{\Omega_r^- \cap B_r^+} |u^--p_k|, \sup_{\Omega_r^- \cap B_r^-} |u^--l_k^-|\Big\} \\
& \leq 2 C_0 r d_k.
\end{split}
\end{equation}

Let $v\in C^\infty(B_r)\cap C(\overline{B_r})$ be the classical solution to the Dirichlet problem
$$
\begin{cases}
\Delta v =0 & \hbox{in}~B_r,\\
v=u-p_k & \hbox{on}~\partial B_r.
\end{cases}
$$
Define the linear polynomial $l(x)=a\cdot x + b$, with $a=\nabla v(0)$ and $b= v(0)$.
Then by interior estimates for harmonic functions and~\eqref{eq:polyclose3}, there is some dimensional constant $C_2 = C_2(n)>0$ such that
\begin{equation}
\begin{split}\label{eq:estv}
\sup_{B_{\rho r}} | v - l | & \leq \| D^2 v\|_{L^\infty(B_{\rho r})} (\rho r)^2 \leq \frac{C_2}{(1-\rho)^2 r^2} \sup_{\partial B_r} |u-p_k| (\rho r)^2 \\
& \leq \frac{4C_2}{ r^2} ( 2 C_0 r d_k )(\rho r)^2
\leq \left(8 C_2 \rho^{1/2} \right) C_0 d_{k+1} \rho^{k+1} R\\
&\leq \frac{1}{3}C_0 d_{k+1} \rho^{k+1} R,
\end{split}
\end{equation}
by choosing $\rho = \rho(n)>0$ sufficiently small so that $8 C_2\rho^{1/2} \leq 1/3$. 
Moreover, by~\eqref{eq:polyclose3} and standard interior estimates for harmonic functions, we have
\begin{equation}\label{eq:estcoef}
r |a| + |b| \leq r \|\nabla v\|_{L^\infty{(B_{r/2}})} +\| v\|_{L^\infty{(B_{r/2}})}  \leq C_2 \sup_{\partial B_r} |u-p_k| \leq C_1 C_0 d_k r,
\end{equation}
where $C_1= 2C_2$.\medskip

Let $w\in C(\overline{B_r})$ be the function given by
$$w=u-p_k-v.$$
Using the equations satisfied by $u$ and $p_k$, we get that $w$ is a weak solution to
\[
\begin{cases}
\Delta w = g \mes{\Gamma} - g(0) \mes{\{x_n = 0\}} & \text{in}~B_{r},\\
w = 0 & \text{on}~\partial B_{r}.
\end{cases}
\]
By Lemma~\ref{keylemma} and \eqref{eq:dk}, 
there is a dimensional constant $C_3 = C_3(n) > 0$ such that
\begin{equation}\label{eq:estw}
\begin{split}
\sup_{B_{\rho r}} |w| &
\leq C_3 r \omega(r)  = C_3 r \max\left\{|g(0)| \gradiniz(r), \gdiniz(r)\right\}
\leq \frac{C_3}{\rho^{1/2}} R \rho^k \rho^{1/2}d_k \\ 
&\leq   \Big(\frac{C_3}{\rho^{3/2}C_0}\Big) C_0 d_{k+1} \rho^{k+1} R \\
& \leq \frac{1}{3} C_0 d_{k+1} \rho^{k+1} R,
\end{split}
\end{equation}
where in the last line we chose $C_0 = C_0(n)>0$ sufficiently large such that $3C_3\leq \rho^{3/2}C_0$.

Define the linear approximations $l_{k+1}^\pm(x)=a_{k+1}^\pm\cdot x + b_{k+1}$ as
\begin{equation}\label{eq:lk1}
l_{k+1}^\pm = l_k^\pm + l.
\end{equation}
By~\eqref{eq:estcoef}, we have that
\begin{align*}
\rho^k R |a_{k+1}^\pm - a_k^\pm| + |b_{k+1}- b_k| & = \rho^k R |a| + |b| \leq  C_1 C_0 d_k \rho^k R.
\end{align*}
Using~\eqref{eq:lk1}, \eqref{eq:estw}, \eqref{eq:polyclose} (with $\rho r$ instead of $r$, which remains true), and~\eqref{eq:estv}, we conclude that
\begin{align*}
\sup_{\Omega_{\rho^{k+1}R}^\pm } |u^\pm -l_{k+1}^\pm| 
& \leq \sup_{\Omega_{\rho r}^\pm } |u^\pm -p_{k} - v| + \sup_{\Omega_{\rho r}^\pm  } |p_k -l_k^\pm| + \sup_{\Omega_{\rho r}^\pm  } |v - l |\\
& \leq \sup_{B_{\rho r}} |w| + r\rho \omega(r\rho) +  \sup_{B_{\rho r} } |v - l | \\
& \leq \frac{1}{3}C_0 d_{k+1}R \rho^{k+1} + d_{k+1} \rho^{k+1} R+ \frac{1}{3}C_0 d_{k+1} \rho^{k+1} R\\
& \leq C_0 d_{k+1} \rho^{k+1} R,
\end{align*}
by choosing $C_0$ larger such that $C_0 \geq 3$.
\end{proof}

Next, we show that Theorem~\ref{thm2} follows from Theorem~\ref{thm3}.

\begin{proof}[Proof of Theorem~\ref{thm2}]
Take $R$ to be any $0 < R\leq 1$ such that $\gradiniz(R) \leq 1$ and $\int_{0}^{R} \frac{\gradiniz(t)}{t} d t \leq 1$.
Let $d_k$ be given as in~\eqref{eq:dk} for $k\geq 1$.
First, we will prove that
\begin{equation} \label{eq:finite}
    \sum_{j=1}^\infty d_j <\infty.
\end{equation}
Indeed, by definition, we have that $d_0 = \|u\|_{L^{\infty}(B_R)}R^{-1} + |g(0)|$ and
\begin{align*}
0 < d_k \leq \omega(\rho^k R) + \rho^{1/2} d_{k-1} \quad \hbox{for}~k\geq 1,
\end{align*}
where $\omega$ is given by~\eqref{def:modulus}.
For any $k\geq 1$, it follows that 
\begin{align*}
\sum_{j=1}^k d_j \leq \sum_{j=1}^k \omega(\rho^j R) + \rho^{1/2}\sum_{j=0}^{k-1} d_{j}
\leq \sum_{j=1}^k \omega(\rho^j R) + \rho^{1/2} d_0 + \rho^{1/2} \sum_{j=1}^{k} d_j.
\end{align*}
Since $\omega$ is a Dini function, from the proof of Lemma~\ref{lem:summable} and using that $\rho \leq 1/2$, we have
\[
\sum_{k = 1}^{\infty} \omega(\rho^{j}R) \leq
\frac{1}{1-\rho} \int_{0}^{R} \frac{\omega(t)}{t} d t 
\leq 2 \int_{0}^{R} \frac{\omega(t)}{t} d t,
\]
and thus,
\begin{equation}
\label{eq:util}
\begin{split}
\sum_{j=1}^k d_j  &\leq \frac{1}{1-\rho^{1/2}} \Big( \sum_{j=1}^k \omega(\rho^j R) + \rho^{1/2} d_0\Big) \\
&\leq C \left(\int_{0}^{R} \frac{\omega(t)}{t} d t+ \frac{\|u\|_{L^{\infty}(B_R)}}{R} + |g(0)|\right)\\
& \leq C \left(\int_{0}^{R} \frac{\gdiniz(t)}{t} d t+\frac{\|u\|_{L^{\infty}(B_R)}}{R} + |g(0)|\right),
\end{split}
\end{equation}
for some constant $C = C(n) > 0$,
where in the last line we used that
\[
\int_0^R\frac{\omega(t)}{t} d t \leq |g(0)| + \int_0^R\frac{\gdiniz(t)}{t} d t,
\]
thanks to the normalization for $\gradiniz$.
Letting $k\to \infty$, we conclude~\eqref{eq:finite}.

From~\eqref{eq:cauchy} and~\eqref{eq:finite}, it follows that there are $a^\pm \in \Rn$ and $b\in \R$ such that
\[
a_k^\pm \to a^\pm \quad \hbox{and}\quad b_k \to b \qquad\hbox{as}~k\to \infty.
\]
Moreover, by Theorem~\ref{thm3} and \eqref{eq:util} we deduce $a^+-a^-=g(0)e_n$, and the estimates
\[
\begin{split}
|a^{\pm}| &\leq |a_0^{\pm}| + \sum_{k = 1}^{\infty} |a_{k}^{\pm} - a_{k-1}^{\pm}| \leq \frac{1}{2}|g(0)| + C_1C_0 \sum_{k = 1}^{\infty}d_{k-1} \\
& \leq C \left( \int_{0}^{R} \frac{\gdiniz(t)}{t} d t+ \frac{\|u\|_{L^{\infty}(B_R)}}{R} + |g(0)|\right),
\end{split}
\]
and
\[
\begin{split}
|b| &\leq |b_0| + \sum_{k = 1}^{\infty} |b_k - b_{k-1}| \leq C_1 C_0 \sum_{k =1}^{\infty} d_{k-1} \rho^{k-1} R \leq C_1 C_0 R\sum_{k = 1}^{\infty} d_{k-1}\\
& \leq C \left( \int_{0}^{R} \frac{\gdiniz(t)}{t} d t + \|u\|_{L^{\infty}(B_R)} + R|g(0)|\right),
\end{split}
\]
where $C = C(n) > 0$ is a dimensional constant.

Define 
\[
l^\pm(x)=a^\pm \cdot x + b.
\]
Given $0 < r \leq R$, there is $k \geq 0$ such that $\rho^{k+1} R < r \leq \rho^{k} R$.
By Theorem~\ref{thm3}, we see that
\begin{align*}
\sup_{\opm \cap B_r} |u^\pm - l^\pm| & \leq \sup_{\opm \cap B_{\rho^kR}} |u^\pm - l_k^\pm|  + \sup_{\opm \cap B_{\rho^kR}} |l_k^\pm - l^\pm| \\
& \leq C_0 d_k \rho^k R + \rho^k R |a_k^\pm - a^\pm|  +|b_k - b|\\
& \leq C_0 d_k r + \rho^k R \sum_{j = k}^{\infty} |a_{j+1}^\pm - a_{j}^\pm|  +\sum_{j = k}^{\infty} |b_{j+1} - b_{j}|\\ 
& \leq C_0 d_k r + 2C_1 C_0 r \sum_{j = k}^{\infty} d_{j} 
\\
& \leq 3 C_1 C_0 r \sum_{j = k}^{\infty} d_{j} =  r \sigma(r),
\end{align*}
with
\[
\sigma(r) = 3 C_1 C_0 \sum_{j = k}^{\infty} d_{j} \quad \text{ for } \rho^{k+1} R < r \leq \rho^{k} R.
\]
Here, recall that the constants $C_0$ and $C_1$ depend only on $n$.
Furthermore, using~\eqref{eq:finite}, we get $\sigma(r) \to 0$ as $r\to 0$ (implying $k\to\infty$). We conclude the desired result.
\end{proof}

\begin{rem} \label{rem:natural}
We point out that, in the proof of Theorem~\ref{thm2}, it is crucial to have
    $$
    \sum_{j=0}^\infty \omega(\rho^j R)<\infty,
    $$
    where $\omega(r)=\max\{|g(0)|\gradiniz(r), \gdiniz(r)\}$.
    By Lemma~\ref{lem:summable}, this is equivalent to $\omega$ being a Dini function.
    In view of the counterexamples from Section~\ref{sec:counterexamples},
    the pointwise $C^{1,\dini}$ and $C^{0,\dini}$ assumptions on $\Gamma$ and $g$ in our main theorem
    are natural conditions to have piecewise differentiability of solutions to~\eqref{eq:main}.
\end{rem}

Finally, we give the proof of our main theorem.

\begin{proof}[Proof of Theorem~\ref{thm1}]
By Theorem~\ref{thm:existence}, we have that
$$
\|u\|_{L^\infty(\Omega)} \leq C \|g\|_{L^\infty(\Gamma)},
$$
where $C>0$ depends only on $n$, $\Omega$, and $\Gamma$. It remains to prove that 
\begin{equation}\label{eq:estimate}
    |u(x_1)-u(x_0)|\leq C \|g\|_{C^{0,\dini}(\Gamma)} |x_1-x_0| \qquad \text{ for all } x_0,x_1 \in \overline{\Omega}.
\end{equation}
By a standard covering argument, and using the classical interior and boundary estimates for harmonic functions, it is enough to prove~\eqref{eq:estimate} for $x_0\in \Gamma$ and $x_1 \in \Omega\setminus \Gamma$ sufficiently close to $x_0$.

Since $\Gamma$ is a $C^{1,\dini}$ interface, we may choose a radius $r_0$, independent of $x_0$, and an appropriate system of coordinates, such that 
$$x_0=0 \quad \hbox{and} \quad \Gamma \cap B_{r_0}= \{ x=(x',x_n)\in B_{r_0} : x_n = \psi(x')\},$$
for some $\psi \in C^{1,\dini}(0)$. Moreover, taking $ r_0^{-1}u(r_0 x)$ in place of $u$ we may assume that $r_0=1$. 

By Theorem~\ref{thm2}, there exist linear polynomials $l^\pm(x)= a^\pm \cdot x + b$, 
a radius $0 < R \leq 1$ depending only on $\Gamma$, and a function $\sigma(r)$ depending only on $n$, $\|g\|_\infty$, $\omega_\Gamma$, and $\omega_g$, such that
\begin{equation} \label{eq:eq3}
    \sup_{\opm \cap B_r} |u^\pm - l^\pm| \leq r \sigma(r) \qquad \text{ for all } r \leq R,
\end{equation}
 with $\sigma(r) \to 0$ as $r \to 0$.
In particular, $u(0) = b$. Furthermore,
\[
|a^{+}| + |a^{-}| \leq C \left(\|u\|_{L^{\infty}(B_R)} R^{-1}  + [g]_{C^{0,\dini}(0)} \right) \leq C \|g\|_{C^{0,\dini}(\Gamma)}
\]
for some constant $C > 0$ depending only on $n$, $\Omega$, and $\Gamma$.

Let $x_1\in \Omega^\pm$ and set $r=|x_1|$. If $r$ is sufficiently small so that $r\leq R$ and $\sigma(r)\leq 1$, then by \eqref{eq:eq3}, we get that
$$
|u(x_1)-u(0)| \leq |u^\pm(x_1) - l^\pm(x_1)| + |a^\pm| |x_1|  \leq  r \sigma(r) + |a^\pm| |x_1| \leq C\|g\|_{C^{0,\dini}(\Gamma)}|x_1|.
$$
Therefore, we have seen that there is a radius $r > 0$ depending only on $n$, $\Omega$, $\Gamma$, and $g$ such that, for all $x_0 \in \Gamma$, we have
$$
|u(x_1)-u(x_0)| \leq C\|g\|_{C^{0,\dini}(\Gamma)}|x_1-x_0| \quad \text{ for all } x_1 \in B_r(x_0),
$$
which yields~\eqref{eq:estimate}.
\end{proof}

\begin{proof}[Proof of Corollary~\ref{cor:piece}]
The result follows
from Proposition~\ref{prop:patching}. To apply this proposition, we have to check the
prerequisites (i), (ii), and (iii). Notice that (i) follows from interior estimates
for harmonic functions, (ii) holds because of Theorem~\ref{thm2}, and the uniform Dini condition. Furthermore, (iii) is satisfied due to the fact that $u - l_y$
satisfies (distributionally) the same equation as $u$ (and hence, our obtained
estimates also apply to $u - l_y$). The details are left to the reader.
\end{proof}

\appendix

\section{$C^1$ regularity up to the boundary}
\label{sec:c1}

The following abstract result provides sufficient conditions for a differentiable function defined on a $C^1$ bounded domain to be $C^1$ up to the boundary.
Analogous results have been obtained for $C^\alpha$, $C^{1,\alpha}$, and $C^{2,\alpha}$ spaces; see for instance \cite[Propositions~2.3 and 2.4]{MS} and \cite[Proposition~6.2]{CSCS}.
The general idea is to obtain global regularity results by patching local interior and boundary estimates.

\begin{prop} \label{prop:patching}
    Let $\Omega \subset \Rn$ be a bounded $C^{1}$~domain. 
    For $x\in\Omega$, let $d_x=\dist(x,\partial\Omega)$. 
    Let $u$ be a differentiable function defined on $\overline{\Omega}$ satisfying the following properties:
\begin{enumerate}[label=\emph{(\roman*)}]
\item \label{def:int} (Interior estimates).~There exists a constant $A>0$ such that for any $x\in\Omega$, there exists a linear polynomial $l_x$, with
\[
|l_x(x)| + d_x |\nabla l_x| \leq A \|u\|_{L^{\infty}(B_{d_x}(x))},
\]
and
\[
\sup_{B_{\rho}(x)} |u - l_x| \leq A \|u\|_{L^{\infty}(B_{d_x}(x))} \frac{\rho^2}{d_x^2} \quad \text{ for all } \rho \leq d_x/2.
\]

\item (Boundary estimates).~There exist 
a nondecreasing function $\sigma$, with $\sigma(\rho)\to 0$ as $\rho\to 0$, and a radius $R>0$, such that for any $y\in\partial\Omega$, there is a linear polynomial $l_y$ such that
\[
\sup_{B_{\rho}(y) \cap {\Omega}} |u - l_y| \leq  \rho \sigma(\rho) \quad \hbox{ for all } \rho \leq R.
\]

\item (Invariance property).~For any $y\in\partial\Omega$, the function $u-l_y$ also satisfies the interior estimates given in~\ref{def:int}.\medskip
\end{enumerate} 

Then
\[
\sup_{B_{\rho}(x) \cap \Omega} |u-l_x| \leq C \rho E(x, \rho) \qquad \hbox{for all } x \in \Omega \text{ and } \rho\leq R,
\]
where $C$ depends only on $A$,
and the error $E(x, \rho)$ is given by 
\[
E(x, \rho) = \max\{\sigma(2 d_x), \sigma(4\rho)\}.
\]
As a consequence,
$u\in C^1(\overline\Omega)$.
\end{prop}

\begin{proof}
Fix $x \in \Omega$ and let $y \in \partial \Omega$ be such that $|x- y| = d_x$. Let $l_x$ and $l_y$ be the linear polynomials given in (i) and (ii), respectively. By (iii), there is a linear polynomial $\widetilde{l}_x$ 
such that the interior estimates hold for $u-l_y$ in place of $u$. 
We want  to control
\[
\sup_{B_{\rho}(x) \cap \Omega} |u - l_y - \widetilde{l}_x|.
\]
We distinguish two cases:
\medskip

\noindent
\textbf{Case 1:} $0<\rho \leq d_x/2$.
By interior estimates for $u - l_y$, we have
\[
\begin{split}
\sup_{B_{\rho}(x) \cap \Omega}|u - l_y-\widetilde{l}_x| & \leq A \|u - l_y\|_{L^{\infty}(B_{d_x}(x))} \frac{\rho^2}{d_x^2}
\leq 2 A \frac{\sigma(2 d_x) \rho^2 }{d_x},
\end{split}
\]
where in the last line, we used the boundary estimates together with $B_{d_x}(x) \subset B_{2d_x}(y)$.

\medskip
\noindent
\textbf{Case 2:} $d_x/2 < \rho \leq R$.
By interior estimates for $u-l_y$ and boundary estimates for $u$, we have
\[
|\widetilde{l}_x(x)| + d_x |\nabla \widetilde{l}_x| \leq A \|u-l_y\|_{L^{\infty}(B_{d_x}(x))} \leq 2 A d_x \sigma(2 d_x),
\]
and by boundary estimates again,
\[
\begin{split}
\sup_{B_{\rho}(x) \cap \Omega} |u - l_y - \widetilde{l}_x| 
&\leq \sup_{B_{\rho}(x) \cap \Omega}|u - l_y| + |\widetilde{l}_x(x)| + |\nabla \widetilde{l}_x| \rho\\
& \leq 2  d_x \sigma(2 d_x) + 2 A  (d_x+ \rho) \sigma(2 d_x)\\
& \leq 4  \rho \sigma(4 \rho) + 6 A  \rho \sigma(4 \rho)\\
& \leq 2(2 + 3 A )  \rho \sigma(4 \rho).
\end{split}
\]

It follows that $l_x = \widetilde{l}_x+l_y$, where $l_x(z) = u(x) + \nabla u(x) \cdot (z-x)$, and
\[
\sup_{B_{\rho}(x) \cap \Omega} |u-l_x| \leq C
\begin{cases}
\frac{\sigma(2d_x) \rho^2}{d_x} & \text{ if } 0< \rho \leq d_x/2,\\
\rho \sigma(4\rho) & \text{ if } d_x/2< \rho \leq R,
\end{cases}
\]
for some constant $C$ depending only on $A$. 
In particular, we have the bound
\[
\sup_{B_{\rho}(x) \cap \Omega} |u-l_x| \leq C \rho 
E(x, \rho) \qquad \hbox{for all } \rho\leq R,
\]
proving the first claim. \medskip

To show that $u \in C^1(\overline{\Omega})$ from the previous estimate, 
we modify a trick that we learned from the recent book~\cite[Appendix A, Proof of (H4)]{Xavis}.
Let $x \in \Omega$ and $y\in \partial \Omega$, and write $\rho := |x-y|$.
Let $z \in \Omega$ be such that 
\[
\frac{\rho}{4} \leq |z-x| \leq 2 \rho \quad \text{ and } \quad \frac{\rho}{4} \leq |z-y| \leq 2 \rho.
\]
Then the following first order Taylor expansions hold as $\rho\to0$:
\[
\begin{split}
u(z) & = u(x) + \nabla u(x)\cdot (z-x) + \rho \, O(E(x, 2\rho))\\
&= u(y) + \nabla u(y) \cdot (z-y) + \rho \, O(E(y, 2\rho))\\
&= u(x) + \nabla u(x)\cdot (y-x) + \rho \, O(E(x, \rho)) + \nabla u(y) \cdot (z-y) + \rho \, O(E(y, 2\rho)),
\end{split}
\]
where the big $O$ notation, $f(x)=O(g(x))$ as $x\to x_0$, means that there is some constant $C>0$ such that $|f(x)|\leq C g(x)$ for all $x$ sufficiently close to $x_0$.
Hence,
\[
(\nabla u(x) - \nabla u(y)) \cdot \frac{(z-y)}{|z-y|} = O(E(y, 2\rho)) + O(E(x, 2\rho)).
\]

Let $\nu_y$ denote the inner unit normal to $\Omega$ at $y \in \partial \Omega$.
By $C^1$ regularity of $\partial \Omega$, for any direction $e \in \partial B_1$ with $e \cdot \nu_y > 0$, we can find $z \in \Omega$ as above such that $\frac{z-y}{|z-y|} = e$, provided $\rho$ is taken sufficiently small.
It follows that
\[
\lim_{x \to y} |\partial_e u(x) - \partial_e u(y)| \leq C \lim_{\rho = |x-y| \to 0} (E(y, 2\rho) + E(x, 2\rho)),
\]
where we write $\partial_e u(x) = \nabla u(x) \cdot e$.
Since $y \in \partial \Omega$, we have $d_y = 0$ and hence
\[
E(y, 2\rho) = \sigma(8 \rho) \to 0 \text{ as } \rho \to 0.
\]
Moreover, 
we also have that $d_x \to 0$ as $\rho = |x-y| \to 0$, which yields
\[
\lim_{\rho = |x-y| \to 0} E(x, 2\rho) \leq \max\left\{ \lim_{x \to y}\sigma(2d_x), \lim_{\rho \to 0}\sigma(8 \rho)\right\} = 0.
\]
Therefore, we conclude that 
\[
\lim_{x \to y} |\partial_e u(x) - \partial_e u(y)| = 0
\]
for all $e \in \partial B_1$ with $e \cdot \nu_y > 0$.
It remains to prove that the same limit continues to hold for directions $e \in \partial B_1$ with $e \cdot \nu_y = 0$.
For this, writing $e$ as $e = \nu_y - (\nu_y-e)$, since $(\nu_y -e) \cdot \nu_y = 1 > 0$, by the above we deduce
\[
\lim_{x\to y}|\partial_e u(x) - \partial_e u(y)| \leq \lim_{x\to y}|\partial_{\nu_y} u(x) - \partial_{\nu_y} u(y)| 
+\lim_{x\to y}|\partial_{\nu_y-e} u(x) - \partial_{\nu_y-e} u(y)| = 0.
\]
It follows that $u$ is $C^1$ up to the boundary.
\end{proof}

\section{Pointwise Dini vs. uniformly Dini}
\label{sec:pointwise}

As mentioned in Remark~\ref{rem:dini}, a function that is  $C^{0,\dini}$ for all points is not necessarily $C^{0,\dini}$ uniformly (see Definition~\ref{def:dinispaces}). The following lemma proves this fact.

\begin{lem} \label{lem:dini}
There exists a continuous function $g:[-1,1]\to \R$ such that $g\in C^{0,\dini}(x_0)$ for all $x_0 \in [-1,1]$, but $g\notin C^{0,\dini}([-1,1])$.
\end{lem}
\begin{proof}
For $j\geq 1$, let  
\[
a_j=\frac{\log 2}{\log(j+1)}  \quad \hbox{and} \quad h_j=\frac{1}{(\log\log(j+1))^2}.
\]
Note that $\{a_j\}_{j=1}^\infty$ is a monotone decreasing sequence such that $a_1 = 1$ and $a_j \to 0$ as $j\to \infty$. 
Hence, $(0,1]= \bigcup_{j=1}^\infty I_j,$ where $I_j=[a_{j+1},a_{j}]$ and ${\rm int}(I_j)\cap {\rm int}(I_k)=\emptyset$ for any $k\neq j$.
Let 
$$b_j= \frac{a_j+a_{j+1}}{2} \quad 
\quad \hbox{and} \quad m_j=\frac{2h_j}{|I_j|},$$
where $|I_j|=a_{j}-a_{j+1}$.
We define a continuous function $g \colon [0,1]\to \R$ by
 \[
 g(x) = 
 \sum_{j=1}^\infty 
 m_j(x-a_{j+1}) \chi_{(a_{j+1}, b_j]} + m_j(a_j - x) \chi_{(b_j, a_j]} \quad \hbox{for}~x\in[0,1].
 \]
Notice that $g(0) = 0$.
Geometrically, the interval $I_j$ is the base of an isosceles triangle with height $h_j\to 0$ as $j\to \infty$, and the graph of $g$ over $I_j$ is the union of the two remaining edges that form the triangle.
Taking the even reflection, we obtain a continuous function $g \colon [-1,1] \to \R$.

Given $x_0\in[-1,1]$ and $r\leq 1$, let 
\[
\omega_{g,x_0}(r)= \sup_{x\in (x_0-r, x_0+r)\cap [-1,1]} |g(x)-g(x_0)|.
\]
Since $\sup_{[-1,1]}|g|= h_1 \leq 1/(\log\log2)^2$, we have that $\omega_{g,x_0}(r)\leq 2/(\log\log2)^2$ for all $r\leq 1$.
For $x_0 \in (0,1]$ (the case $x_0 \in [-1,0)$ is analogous),
there exists $j\in \N$ such that $a_{j+1} < x_0 \leq a_{j}$.
If $x_0\neq a_j$, then for $\delta< \min \big(|x_0-a_{j}|, |x_0-a_{j+1}| \big)$ we have
\[
\omega_{g,x_0}(r) = \sup_{x\in (x_0-r, x_0+r)} |m_j(x-x_0)|= m_j r \qquad \hbox{for all}~r<\delta,
\]
and thus,
\[
\int_0^\delta \frac{\omega_{g,x_0}(r)}{r} \leq m_j \delta < \infty.
\]
Similarly, if $x_0 = a_j$, the above computation holds taking $\delta<|I_j|/4$, and noting that $\omega_{g,x_0}(r)\leq m_{j-1}r$ for all $r<\delta$.
It follows that $g \in C^{0,\dini}(x_0)$ for all $x_0  \in [-1,1] \setminus \{0\}$.

It remains to show the case $x_0 = 0$. 
Since $g(0) = 0$, we have that
\[
\omega_{g,0}(r) = \sup_{x \in (- r, r)}|g(x)| \qquad\hbox{for all}~r\leq 1.
\]
Moreover, $\sup_{I_j \cup -I_j}|g| = h_j$, and hence
\[
\int_0^1 \frac{\omega_{g,0}(r)}{r} \, dr 
= \sum_{j=1}^\infty \int_{a_{j+1}}^{a_j}\frac{\omega_{g,0}(r)}{r} \, dr \leq  \sum_{j=1}^\infty \int_{a_{j+1}}^{a_j}\frac{\sup_{I_j} |g| }{r}\, dr
=\sum_{j=1}^\infty h_j \log \left(\frac{a_j}{a_{j+1}}\right).
\]
We will see that the infinite sum converges.
Recall that $\log(1+x)\sim x$ as $x\to 0$ 
(here, we write $f(x)\sim h(x)$ as $x \to x_0$ if $f(x) / h(x) \to 1$ as $x\to x_0$).
We observe that
\[
\log\left(\frac{a_j}{a_{j+1}}\right) 
= \log \left( \frac{\log(j+2)}{\log(j+1)} \right) 
\sim \frac{\log(j+2)}{\log(j+1)} -1 = \frac{\log\big(1+\frac{1}{j+1}\big)}{\log(j+1)} 
\sim \frac{1}{j \log j} \quad \hbox{as}~j\to \infty.
\]
Hence,
\[
 h_j \log \left(\frac{a_j}{a_{j+1}}\right) \sim  \frac{1}{ (\log\log j)^2}  \frac{1}{j \log j} \quad \hbox{as}~j\to \infty.
\]
Furthermore, we have 
$$
\int_{e^e}^\infty \frac{1}{(\log\log x)^2} \frac{1}{x\log x}\, dx 
= \int_{1}^\infty \frac{1}{y^2}\, dy < \infty,
$$
where we used the change of variables $y = \log\log x$.
By the limit comparison test and the integral test, it follows that
$$\sum_{j=1}^\infty h_j \log \left(\frac{a_j}{a_{j+1}}\right)<\infty,$$
and we conclude $g \in C^{0,\dini}(0)$.
Therefore, we have proved that $g\in C^{0,\dini}(x_0)$ for all $x_0\in [-1,1]$. 

Now, for $r\leq 1$, consider
$$
\omega_g(r) = \sup_{x_0\in [-1,1]} \omega_{g,x_0}(r).
$$
Note that $\omega_g$ is well defined since $\omega_{g,x_0}$ is uniformly bounded. 
Fix $0<R\leq 1$. Since $|I_j|\to 0$ as $j\to\infty$, there exists $j_0\geq 1$ such that $|I_j|< 4R$ for all $j \geq j_0$.
Then, for all $j\geq j_0$, we have 
\begin{align} \label{eq:eq1}
\int_0^R \frac{\omega_g(r)}{r}\, dr 
&\geq \int_{|I_j|/4}^R \frac{\omega_{g,b_j}(r)}{r}\, dr
\geq \frac{m_j |I_j| }{4} \int_{|I_j|/4}^R \frac{1}{r}\, dr
= 2 h_j \log R -2 h_j \log |I_j|.
\end{align}
It suffices to show that $- h_j  \log |I_j| \to \infty$ as $j\to \infty$. We have 
$$
|I_j| = \frac{\log 2}{\log(j+2)} \frac{\log\big(1+\frac{1}{j+1}\big)}{\log(j+1)}\sim\frac{\log 2}{j (\log j)^2}
\quad \hbox{as}~ j\to\infty,
$$
and thus,
\begin{equation} \label{eq:eq2}
- h_j \log |I_j| \sim  \frac{1}{(\log \log j)^2} \log\left( \frac{j (\log j)^2}{\log 2}\right) \sim \frac{\log j}{(\log \log j)^2} \to \infty \quad \hbox{as}~j\to \infty.
\end{equation}
Combining \eqref{eq:eq1} and \eqref{eq:eq2}, letting $j \to \infty$, we see that
$$
\int_0^R \frac{\omega_g(r)}{r}\, dr =\infty \qquad \hbox{for any}~0 < R\leq 1.
$$
Therefore, $\omega_g$ is not a Dini function, and we conclude that $g\notin C^{0,\dini}([-1,1])$.
\end{proof}

\subsection*{Acknowledgments}
The authors wish to thank the referee for their careful reading of the manuscript, as well as for their useful suggestions that greatly improved the presentation of this paper.
We would also like to thank Xavier Ros-Oton for a useful hint for the proof of Proposition~\ref{prop:patching}.

\end{document}